\numberwithin{equation}{section}
\DeclareMathOperator{\area}{Area}
\DeclareMathOperator{\arccoth}{arccoth}
\DeclareMathOperator{\arctanh}{arctanh}
\DeclareMathOperator{\arccot}{arccot}
\newcommand{\ddt}[1]{\frac{\mathrm{d}#1}{\mathrm{d}t}}
\newcommand{\pp}[2]{\frac{\partial#1}{\partial#2}}
\newcommand{\D}{\mathrm{d}}
    \newcommand{\Addresses}{{
  \bigskip
  \footnotesize
  \noindent Guangming Hu, \href{18810692738@163.com}{18810692738@163.com}
\newline\textit{College of Science, Nanjing University of Posts and Telecommunications,
  Nanjing, 210003, P.R. China.}\par\nopagebreak
  \medskip
  \noindent
Yi Qi
\href{yiqi@buaa.edu.cn}{yiqi@buaa.edu.cn}
\newline\textit{ School of Mathematical Sciences,
Beihang University, Beijing, 100191, P.R. China}
 \par\nopagebreak
    \medskip
  \noindent Yu Sun, \href{yusun15185105160@163.com}{yusun15185105160@163.com}
  \newline\textit{School of mathematics and physics, Nanjing institute of technology, Nanjing, 211100, P.R. China.} \par\nopagebreak
    \medskip
  \noindent Puchun Zhou, \href{pczhou22@m.fudan.edu.cn}{pczhou22@m.fudan.edu.cn}
  \newline\textit {School of Mathematical Sciences, Fudan University, Shanghai, 200433, P.R. China} \par\nopagebreak
}}
\title{ Hyperbolic Circle Packings  and Total Geodesic Curvatures on  Surfaces with Boundary}
\author{ Guangming Hu, Yi Qi, Yu Sun and Puchun Zhou}
\date{}
\newtheorem{theorem}{Theorem}[section]
\newtheorem{lemma}[theorem]{Lemma}
\newtheorem{corollary}[theorem]{Corollary}
\theoremstyle{definition}
\newtheorem{definition}[theorem]{Definition}
\begin{document}
\maketitle

\begin{abstract}
 This paper investigates a  generalized hyperbolic circle packing (including circles, horocycles or hypercycles) with respect to the total geodesic curvatures on the surface with boundary.  We mainly focus on the existence and rigidity of circle packing whose contact graph is the $1$-skeleton of a finite polygonal cellular decomposition, which is analogous to the construction of Bobenko and Springborn \cite{bo}. Motivated by  Colin de Verdiere's method  \cite{colin},  we introduce the variational principle for 
generalized hyperbolic circle packings on polygons. By analyzing limit behaviours of generalized circle packings on polygons, we give an existence and rigidity for the generalized hyperbolic circle packing with conical singularities regarding the total geodesic curvature on each vertex of the contact graph. As a consequence, we introduce the combinatoral Ricci flow to  find  a desired circle packing with a prescribed total geodesic curvature on each vertex of the contact graph.

 \medskip
\noindent\textbf{Mathematics Subject Classification (2020)}: 52C25, 52C26, 53A70.
\end{abstract}

\section{Introduction}\label{sec1}
\subsection{Background}
The notion of  discrete conformal structures on  polyhedral surfaces is a discrete simulation corresponding to  conformal structures on  smooth surfaces. The research of circle packings on  polyhedral surfaces can be regarded as a  naturally  discrete problem of conformal structures. 
To study the hyperbolic structure on $3$-manifolds,
Thurston \cite{thurston}
 rediscovered   circle packings on triangulated closed surfaces, which are initiated
by  Koebe \cite{Koebe}  and Andreev\cite{andreev1, andreev2}. In the last few decades, many different extensions of circle packings on closed surfaces  have been widely concerned,  including circle patterns, generalized circle packings, vertex scaling, etc. For more details,  see \cite{bo, chow-Luo, David2, David3, guo-luo,Luo11, Luo2, Schlenker}  and others. 

The variational principle plays an important role to study  circle packings, which is  introduced by Colin de Verdiere \cite{colin}. There are nice works on variational principles
on polyhedral surfaces. We refer Bobenko and Springborn \cite{bo}, 
Leibon \cite{Leibon}, Rivin \cite{Rivin}, 
Luo \cite{Luo12,Luo13} and others.
Recently, Nie \cite{nie} discovered a new functional with respect to total geodesic curvatures by analyzing Colin de Verdiere's functional. As an application, Nie studied the rigidity of circle patterns in spherical background geometry and showed the existence and uniqueness of a circle pattern with spherical conical
metric for prescribing a total geodesic curvature at each vertex. The first author and
the third author of this paper studied in \cite{BHS} the existence and rigidity of generalized
hyperbolic circle packing metric with conical singularities on a triangulated surface regarding a total geodesic curvature on each vertex, where the generalized hyperbolic
circle packing may contain circles, horocycles and hypercycles.

 For the generalized hyperbolic circle packing,
one may ask the following two questions:

\begin{enumerate}
    \item Whether the results in \cite{BHS} can be generalized to a finite polygonal cellular decomposition or not.
    \item Whether the setting of \cite{BHS} can be extended to a surface with boundary or not.
\end{enumerate}
 
 In this paper, we give affirmative answers to the above two questions. We prove
that for a finite polygonal cellular decomposition of a surface with boundary and a
total geodesic curvature on each interior vertex and each boundary vertex, there exists
a unique generalized hyperbolic circle packing on the surface such that its contact graph
is the $1$-skeleton of the finite polygonal cellular decomposition.

In differential geometry, it is an important problem to  find a canonical metric with prescribed curvature on a given manifold. To solve this problem, the geometric flows are introduced and play a key role in the development of differential geometry. The Ricci flow was introduced by Hamilton \cite{Hamilton} and Perelman improved  Hamilton’s program of Ricci flow to solve the Poincar\'e conjecture and Thurston’s geometrization conjecture \cite{Perelman1, Perelman2, Perelman3}.
 
For the discrete geometry, the classical geometric flows can not work, since it loses the smooth property. Fortunately, Chow and Luo \cite{chow-Luo} introduced combinatorial Ricci flows to find Euclidean ( spherical or hyperbolic) polyhedral surfaces with zero discrete Gaussian curvatures on conical singularities. They also showed that the solution of the combinatorial Ricci flow exists for the long time and converges  under some combinatorial conditions in Euclidean and hyperbolic background geometry. There are many applications on combinatorial Ricci flows, for example, see \cite{Feng, Gehua, ge2, GeJiang, David, Gu1, Gu2, Gu3, Luo}.
 
 In this paper, we introduce the combinatoral Ricci flow to  find  a desired hyperbolic circle packing with a prescribed total geodesic curvature on each vertex of the contact graph.

\subsection{Main results}

\subsubsection{Finite polygonal cellular decomposition}

Let $S_{g,n}$ be a surface with boundary, which is obtained by removing $n$ topological open disks  from an oriented compact surface with genus $g\geq0$. Let $\eta :(V_{g,n},E_{g,n})\to S_{g,n} $ be an embedding of a graph $(V_{g,n},E_{g,n})$ into $S_{g,n}$ such that  there is at least one point of every
connected component of the boundary belongs to the vertex set $V_{g,n}$ and every connected component of the boundary is the union of some elements of the edge set  $E_{g,n}$.
Here we regard $V_{g,n}$ and $E_{g,n}$ as $\eta(V_{g,n})$ and $\eta(E_{g,n})$ respectively. A face is a connected
component outside of $V_{g,n}$ and $E_{g,n}$ on $S_{g,n}$. Denote by  $F_{g,n}$ the set of faces induced by $\eta.$ 

An embedding is called a \textbf{closed 2-cell embedding} if the following conditions hold:
\begin{enumerate}[1.]
    \item[(a)]The closure of each face is homeomorphic to a close disk.
    \item[(b)] Any face is bounded by a simple closed curve consisting of finite edges.
\end{enumerate}
Set $\Sigma_{g,n}$ a finite cellular decomposition of $S_{g,n}$ induced by a closed 2-cell embedding $\eta$, whose  1-skeleton is the graph $(V_{g,n}, E_{g,n})$.
Denote $V_{g,n},E_{g,n}$ and $F_{g,n}$ as $0$-cells, $1$-cells and $2$-cells of $\Sigma_{g,n}$, respectively. 

A finite cellular decomposition 
is called a \textbf{finite polygonal cellular decomposition}, if it satisfies the following conditions:
\begin{enumerate}
    \item[(I)] Each 2-cell is a polygon.
    \item[(II)]  Every 0-cell  meets at least three 1-cells.
    
   \item[(III)]  The 1-skeleton is a simple graph without loop or double edge.
\end{enumerate}

\subsubsection{Generalized circle packings on polygons}
This section is based on some basic results from the hyperbolic geometry.  The concepts of circle, horocycle and hypercycle are defined as follows, and for better understanding, see Figure \ref{cycles} on the Poincar\'{e} disk model $\mathbb{H}$.\begin{figure}[htbp]
\centering
\includegraphics[scale=0.26]{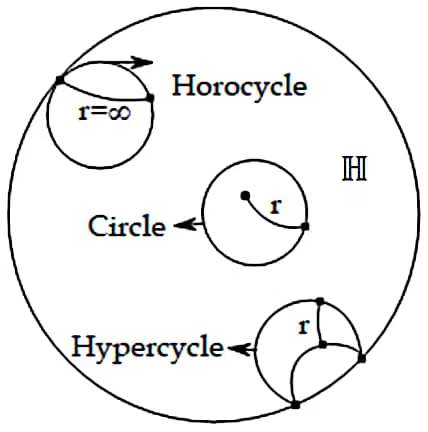}
\captionof{figure}{\small A circle, a hypercycle and a horocycle on $\mathbb{H}$}
  \label{cycles}
\end{figure} 
\begin{enumerate}[1.]
    \item[(1)] A hyperbolic \textbf{circle} $C(w,r)$ centered at $w\in \mathbb{H}$ with radius $r$ is defined as the subset
    $$C(w,r)=\{z\in \mathbb{H}:  d(w,z)=r \},$$
    which is also a Euclidean circle.
    \item[(2)] A hyperbolic \textbf{horocycle} 
    is the limit of a family of hyperbolic circles centered at points on a geodesic ray from a fixed point and passing though that fixed point. Indeed, it is a Euclidean circle in $\overline{\mathbb{H}}$
    which is tangent to the boundary of $\overline{\mathbb{H}}$. The
horocycle is also regarded as a hyperbolic circle centered at the ideal point on $\partial \mathbb{H}$ with radius $r=+\infty$. 
    \item[(3)] Given a geodesic $\gamma$ in $\mathbb{H}$, the \textbf{hypercycle} with axis $\gamma$ and radius $r$ refers to a connected component of the set $$C(\gamma,r)=\{z\in \mathbb{H}:d(\gamma,r)=r\},$$
where $d(z,\gamma)$ is the distance from $z$ to $\gamma$.

\end{enumerate}For simplicity,  circles, horocycles and hypercycles are collectively called \textbf{generalized circles}.
Generalized circles can be uniformly described as curves with constant geodesic
curvatures in  $\mathbb{H}.$

For the case of hypercycle,  
set $\alpha$  a sub-segment of $\gamma$ with two boundary points $b_{1}$ and $b_{2}$ and denote by $\beta_{1}$
and $\beta_{2}$ the  geodesic segments respectively passing $b_{1}$ and $b_{2}$ perpendicular to one component of $C(\gamma,r)$ having constant geodesic curvature.  We  set $C(\alpha,r)$ as the transverse segment of one component of $C(\gamma,r)$ and  call $\alpha$ 
the center of $C(\alpha,r)$. We can construct a half collar by adhering $\beta_{1}$ and $\beta_{2}$ as shown in  Figure \ref{3}.  The relation between their radii, absolute values of geodesic curvatures and arc lengths are
listed in Table $1$ and we refer to \cite{BHS} for details. 
In the case of hypercycle, $\theta$  represents the hyperbolic length of  center $\alpha$ of $C(\alpha,r)$.
Usually we call $\theta$ the \textbf{generalized inner  angle} of $C(\alpha,r)$. For the case of horocycle, the length of arc is calculated as Lemma $2.5$ in \cite{BHS}.

\begin{figure}[h]
\centering
\includegraphics[height = 2.6in, width=4.0in]{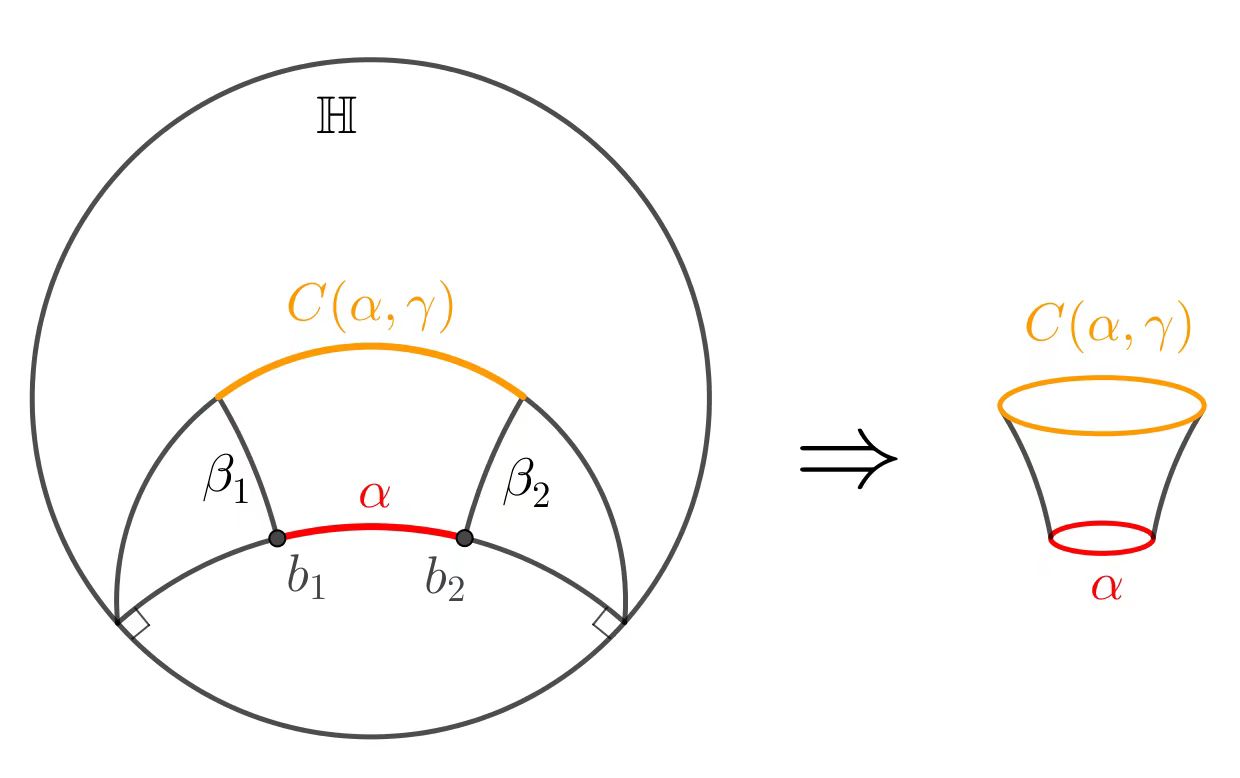}
\caption{Half collar}
\label{3}
\end{figure}
\begin{definition}[Generalized circle packing on polygon]\label{ccp_polygon}
Let $P$ be an abstract polygon with $n$ vertices  $v_1,\cdots,v_n.$  Let $C_i$ be a generalized circle in $\mathbb{H}$  corresponding to $v_i$. \textbf{A generalized circle packing } of $P$ is  the geometric pattern formed by $C_1,\cdots,C_n$  with the following conditions (see Figure \ref{generalcircle}):
\begin{enumerate}[i.]
    \item If $C_i$ and $C_j$ have a nonempty intersection, they must be tangent to each other.
    \item The generalized $C_i$ is tangent to $C_j$ if and only if $v_i$ and $v_j$ are connected by an edge of $P$.    
    \item There exists a hyperbolic circle $C_P$  which is perpendicular to all $C_i$ at tangent points.
\end{enumerate}
\end{definition}
\begin{table}\centering
\centering
\footnotesize	
\begin{tabular}
{l|l|l|l}
 &  \small Radius &  \small Absolute value of geodesic curvature &  \small  Arc length \\ \hline
 \small Circle &  $0<r<\infty$&  $k=\coth r$&  $l=\theta\sinh r$  \\ 
 \small Horocycle &  $r=\infty$&  $k=1$&  -----  \\
 \small Hypercycle &  $0<r<\infty$& $k=\tanh r$ &  $l=\theta\cosh r$  \\ \hline
 
\end{tabular}
\caption{\small The relationship of radius, absolute value of geodesic curvature and arc length}


\label{relation}
\end{table}

    In this definition, Conditions i. and ii. are similar to the circle packing on a triangulation in literature. However, in order to fix the geometry of a generalized circle packing on a surface with boundary with a finite polygonal cellular decomposition, Condition iii. is essentially
    necessary. For the case of geodesic curvatures, the following  Lemma plays an important role,  which will be proved in section 2.

    \begin{lemma}\label{packing on polygon}
        Given an abstract polygon $P$ with vertices $v_1,\cdots,v_n,$ let $k_i\in \mathbb{R}_+,i=1,\cdots,n$. Then there exists a generalized circle packing $\{C_{i}\}_{i=1}^{n}$ of $P$ with the geodesic curvature $k_i$ on each $C_i$. 
        Moreover, let $k_P$ be the geodesic curvature of the circle $C_P$ perpendicular to all $C_i$, then $k_P$ is a $C^1$ continuous function of $(k_1,\cdots,k_n).$
    \end{lemma}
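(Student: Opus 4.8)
The plan is to treat the orthogonal circle $C_P$ as the single unknown and collapse the whole packing problem to one scalar equation. Write $C_P$ as a hyperbolic circle of radius $R \in (0,\infty)$ centered at a point $O \in \mathbb{H}$, so that by Table~\ref{relation} its geodesic curvature is $k_P = \coth R$; I will recover the packing for a suitable $R$ and then read off $k_P$. The geometric observation driving everything is that Condition iii forces each tangency point $p_i = C_i \cap C_{i+1}$ to lie on $C_P$ and forces $C_i$ to meet $C_P$ orthogonally precisely at its two tangency points $p_{i-1}$ and $p_i$. Since the pair $(C_i, C_P)$ is symmetric across the geodesic through $O$ that is its common axis of symmetry, the points $p_{i-1}, p_i$ are reflections of one another and subtend a well-defined angle $\phi_i$ at $O$. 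Going once around $C_P$, the arcs $\widehat{p_{i-1}p_i}$ tile the circle, so a packing with this $C_P$ exists if and only if the closing-up condition $\sum_{i=1}^n \phi_i = 2\pi$ holds; conversely, given an $R$ solving this, I place the $p_i$ at the cumulative angles $\sum_{j\le i}\phi_j$ (which close up) and erect over each arc the unique constant-curvature curve of curvature $k_i$ meeting $C_P$ orthogonally, obtaining the $C_i$. Adjacent circles are then automatically tangent at the shared $p_i$ (both are perpendicular to $C_P$ there, hence tangent to the common radial geodesic), and one checks that no other intersections occur.

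The heart of the argument is the explicit computation of $\phi_i = \phi_i(k_i,R)$, carried out uniformly over the three types of generalized circle. For a genuine circle $C_i$ of radius $r_i$ with center $w_i$, orthogonality at $p_i$ makes $O p_i w_i$ a hyperbolic right triangle with the right angle at $p_i$ and legs $|Op_i| = R$, $|p_i w_i| = r_i$; the right-triangle relation gives $\tan(\phi_i/2) = \tanh r_i/\sinh R = 1/(k_i\sinh R)$, using $k_i = \coth r_i$. For a hypercycle I would instead use the constant-geodesic-curvature (Minkowski normal vector) description: representing $C_i$ by a unit spacelike normal $n_i$ to its axis, orthogonality to $C_P$ becomes the single identity $\langle n_i, O\rangle = \sinh r_i \cosh R$, from which a short computation yields $\cos(\phi_i/2) = \sinh R/\sqrt{\sinh^2 R + \coth^2 r_i}$; since $\coth r_i = 1/k_i$ for a hypercycle, this is again exactly $\tan(\phi_i/2) = 1/(k_i \sinh R)$. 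The horocycle case $k_i = 1$ appears as the common limit of the two. I expect this collapse of all three cases to the single formula
\begin{equation}
\phi_i = 2\arctan\frac{1}{k_i \sinh R}, \qquad k_i \in \mathbb{R}_+,\ R \in (0,\infty),
\end{equation}
to be the main obstacle: each case is elementary, but arranging the normalizations and sign conventions so that circles and hypercycles produce the \emph{identical} elementary function (rather than merely matching in the horocyclic limit) is the delicate point, and it is precisely what the statement that generalized circles are uniformly curves of constant geodesic curvature is buying us.

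Granting the formula, existence and uniqueness are immediate. The function $\Phi(R) := \sum_{i=1}^n 2\arctan\frac{1}{k_i\sinh R}$ is continuous and strictly decreasing on $(0,\infty)$, with $\Phi(R) \to n\pi$ as $R \to 0^+$ and $\Phi(R) \to 0$ as $R \to \infty$. Since $n \ge 3$ by Condition (II), we have $n\pi \ge 3\pi > 2\pi > 0$, so the intermediate value theorem produces a unique $R = R^\ast > 0$ with $\Phi(R^\ast) = 2\pi$, and by the first paragraph this $R^\ast$ yields a generalized circle packing of $P$ realizing the prescribed curvatures $k_1,\dots,k_n$; uniqueness of the packing follows from the strict monotonicity.

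Finally, for the regularity claim I would apply the implicit function theorem to $F(R,k_1,\dots,k_n) := \Phi(R) - 2\pi = 0$. The map $F$ is real-analytic in all variables on $(0,\infty)\times\mathbb{R}_+^n$, and $\partial F/\partial R = \partial\Phi/\partial R < 0$ is nonvanishing by the strict monotonicity used above; hence $R = R(k_1,\dots,k_n)$ is $C^1$ (indeed smooth). Since $k_P = \coth R$ is a smooth function of $R$, the composition $k_P = \coth\big(R(k_1,\dots,k_n)\big)$ is a $C^1$ function of $(k_1,\dots,k_n)$, which is the assertion.
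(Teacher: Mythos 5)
Your proposal is correct and takes essentially the same approach as the paper: both collapse the problem to the single scalar closing condition $\sum_i \theta_i = 2\pi$ in the one dual-circle parameter (you use the radius $R$, the paper the curvature $k_P=\coth R$), both rest on the same angle formula $\tan(\theta_i/2)=1/(k_i\sinh R)=\sqrt{k_P^2-1}/k_i$ valid uniformly for circles, horocycles and hypercycles, and both finish by strict monotonicity plus the intermediate value theorem for existence and uniqueness, and the implicit function theorem for $C^1$ dependence of $k_P$. The only cosmetic differences are that you re-derive the angle formula (right-triangle trigonometry for circles, a hyperboloid-model computation for hypercycles) where the paper invokes its Lemma \ref{vari_angle}, and you assemble arcs along $C_P$ where the paper glues the quadrilaterals $Q_{k_i,k'}$ into a cone metric.
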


Let $V_P$ be the vertex set of  abstract polygon $P$. Two vertices $v$, $w\in V_{P}$ are called
adjacent, denoted by $v\sim w$, if $v$ and $w$ are connected by an edge.
With the help of Lemma \ref{packing on polygon}, we construct a generalized hyperbolic geometric polygon $\Tilde{P}$ corresponding to the circle packing of abstract polygon. The generalized vertices of  $\Tilde{P}$ can be divided into three kinds as follows.
\begin{enumerate}[1.]
    \item The generalized vertex is an  interior point in $\mathbb{H}$ if the geodesic curvature $k_{i}$ defined on $v_{i}$  satisfies  $k_{i}>1$. 
    \item The generalized  vertex is  an  ideal point on the boundary of $\mathbb{H}$ if the geodesic curvature $k_{i}$ defined on $v_{i}$  satisfies $k_{i}=1$. 
    \item The generalized  vertex is a geodesic arc in $\mathbb{H}$ if the geodesic curvature $k_{i}$ defined on $v_{i}$  satisfies $k_{i}<1$. 
\end{enumerate}\begin{figure}[h]
		\centering
		\includegraphics[height = 2.3in, width=4.8in]{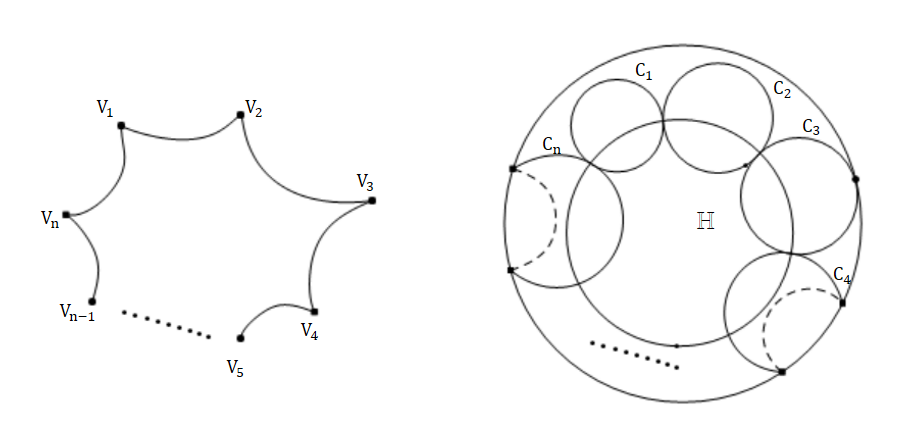}
		\caption{A generalized circle packing}
		\label{generalcircle}
	\end{figure}

Next, we give the geodesic length of  $\gamma_{ij}$  connecting $v_i$  and $v_j$ as the following four cases.
\begin{enumerate}[A.]
    \item If  $v_i $ and $v_j$ are two interior vertices in $\mathbb{H}$, then the hyperbolic distance between $v_i $ and $v_j$ is defined by $$d(v_i,v_j):=\arccoth k(v_i)+\arccoth k(v_j).$$ 
    \item If $v_i $ is an interior vertex in $\mathbb{H}$ and $v_j$ is a geodesic arc of $\mathbb{H}$, then the hyperbolic distance from $v_i $ to the geodesic arc $v_j$ is defined by
    $$
    d(v_i,v_j):=\arccoth k(v_i)+\arctanh k(v_j).
    $$ 
    \item If at least one of $v_i$ and $v_j$ is an ideal vertex  on the boundary of $\mathbb{H}$, then the hyperbolic distance is defined by $$d(v_i,v_j):=+\infty.$$
    \item If $v_i$ and $v_j$ are both geodesic arcs of $\mathbb{H}$, then the hyperbolic distance is defined by $$d(v_i,v_j):=\arctanh{k(v_i)}+\arctanh{k(v_j)}.$$
    \end{enumerate}
This (ideal) hyperbolic polygon $\tilde{P}$ is called the  \textbf{generalized polygon} of a \textbf{generalized circle packing} with geodesic curvatures $k=(k_{v_1},\cdots,k_{v_n})$. 
\begin{figure}[h]
\centering
\includegraphics[height = 2.5in, width=2.7in]{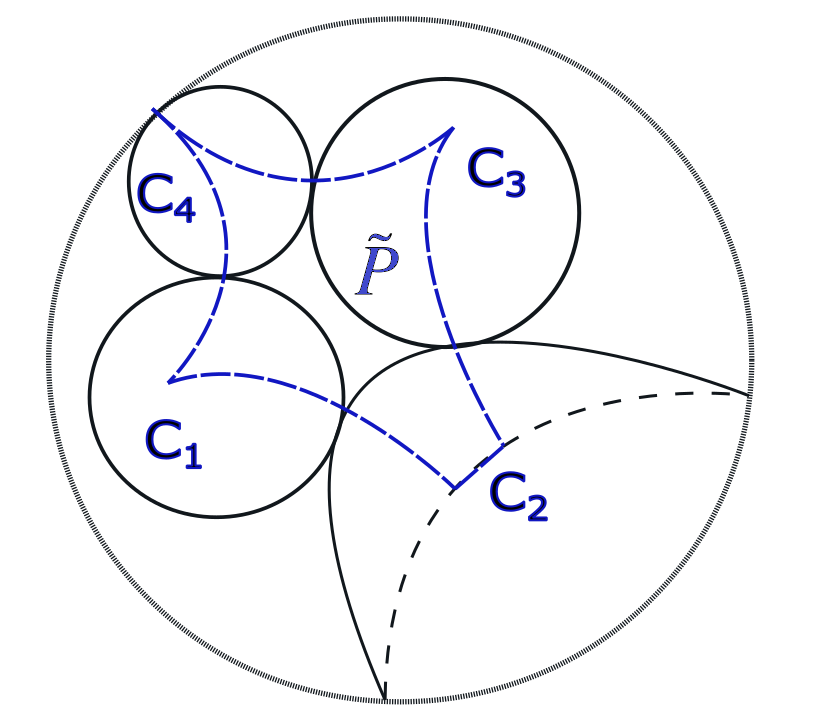}
\caption{A generalized quadrilateral of a generalized circle packing}
\label{hyperbolic_polygon}
\end{figure}

Figure \ref{hyperbolic_polygon} is shown a generalized  quadrilateral with $k_1,k_3>1, k_2<1$ and $k_4=1$.

\subsubsection{Generalized circle packings on surfaces with boundary}
Now we consider the generalized circle packing on surfaces with boundary in the hyperbolic background geometry. 
Set $S_{g,n}$ with a finite polygonal cellular decomposition $\Sigma_{g,n}$ and denote $V_{g,n}$, $E_{g,n}$ and $F_{g,n}$ as the sets of  0-cells, 1-cells and 2-cells, respectively. 
For simplicity of notations, we use one index to denote a vertex ($i\in V_{g,n}$), two indices to
denote an edge ($ij\in E_{g,n}$ is the arc on $S_{g,n}$ joining $i$, $j$) and $m$ indices to denote a face  ($P=i_{1}i_{2}\cdots i_{m}\in F_{g,n}$ is the
polygon  on $S_{g,n}$  bounded by $i_{1}i_{2},i_{2}i_{3}, \cdots,i_{m}i_{1}$).
\begin{figure}[h]
\centering
\includegraphics[height = 2.7in, width=3.5in]{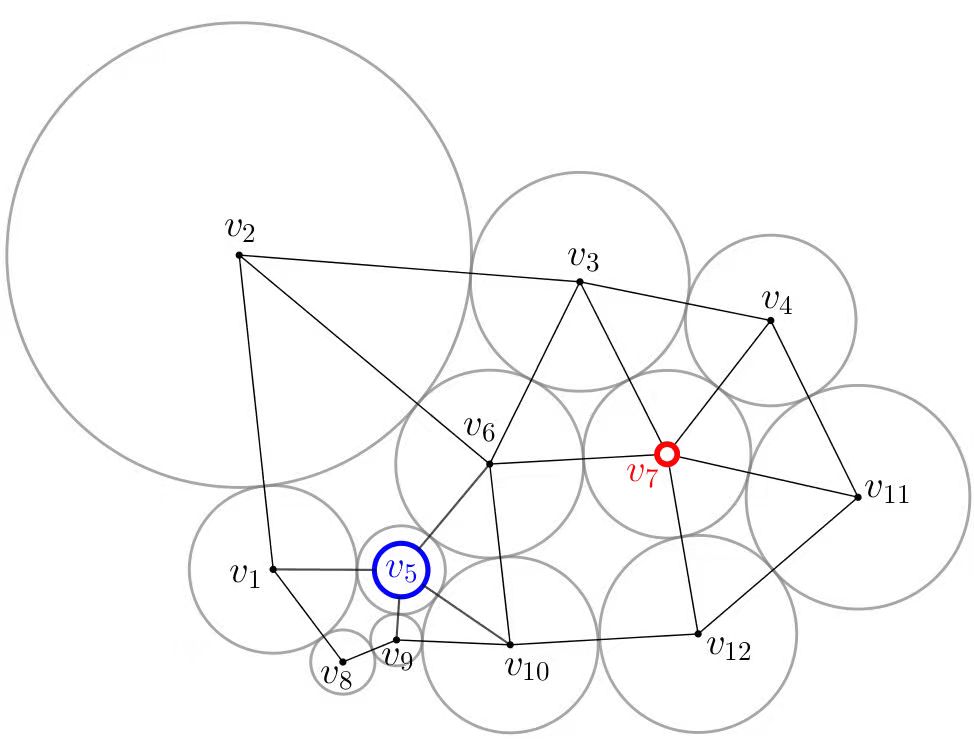}
\caption{A local representation of generalized circle packing}
\label{Local}
\end{figure}

\begin{definition}[Generalized hyperbolic circle packing]

Given  geodesic curvatures $k=(k_{1},\cdots,k_{\vert V_{g,n}\vert })\in \mathbb{R}_{+}^{\vert V_{g,n}\vert}$ on $V_{g,n}$, we can construct a hyperbolic surface $\tilde{S}=\tilde{S}(k)$ as follows:
\begin{enumerate}
    \item For each face $P\in F_{g,n}$ with $m$ vertices $i_{1},i_{2},\cdots, i_{m}\in V_{g,n}$, we construct a 
generalized  polygon $\tilde{P}$ of generalized circle packing  with geodesic curvatures $\textbf{k} = (k_{i_1}, \cdots, k_{i_m})$.
\item Gluing all the generalized  polygons together along their edges,  we can obtain a hyperbolic surface $\tilde{S}=\tilde{S}(k)$, whose metric is called a  \textbf{generalized circle packing metric}.
\end{enumerate}

\end{definition}
Let $V_1,V_2$ and $V_3$ be the set of vertices defined as follows.
\begin{enumerate}
    \item $i\in V_1$ iff $k_{i}\in(0,1)$.
    \item $i\in V_2$ iff $k_{i}=1$.
    \item $i\in V_3$ iff $k_{i}>1$.
\end{enumerate}
Topologically, $\tilde{S}=\tilde{S}(k)$ can be obtained by removing disjoint open disks (or half open disks) $\mathcal{B}_{V_{1}}$ containing $V_{1}$ and finite points $V_2$ from $S_{g,n}$.

  Set $\tilde{\mathcal{C}}=\{C_i\}_{i=1}^{|V_{g,n}|}$  the \textbf{generalized circle packing on $\tilde{S}$} with the geodesic curvatures $\textbf{k}=(k_1,\cdots,k_{|V_{g,n}|})$. Also, we set $\mathcal{D}=\{D_i\}_{i=1}^{|V_{g,n}|}$ the generalized disks bonded by $C_i.$ Figure \ref{Local} shows a local representation of generalized circle packing  $\hat{S}=\hat{S}(\textbf{k})$, where $v_{5}\in V_{1}$
and   $v_{7}\in V_2$.

By $V(P)$ we denote the vertices set of a face $P$.
Denote $N(P)$ as the number of the vertices of $P\in F_{g,n}$. For a subset $W\subset V_{g,n}$, let $F_W$  denote the set
\[
F_W = \{P\in F_{g,n}: \exists v\in W \text{ such that }v \text{ is a 
 vertex of } P \}.
\]
For $P\in F_W$, set $$N(P,W)=\sharp\{v\in W: v \text{ is a vertex of } P\}.$$ 
   The main goal of this paper is to provide the existence and rigidity for generalized hyperbolic circle packings on a surface with boundaries by the total geodesic curvature defined on each vertex. Next, we give the definition of the total geodesic curvature defined on each vertex.
\begin{definition}[Total geodesic curvature]
  The total geodesic curvature of an arc is the integral of geodesic curvature along the arc. The total geodesic curvature of the generalized circle packing  at each vertex is defined by the total geodesic curvature of the corresponding circle, or horocycle, or hypercycle at each vertex.
\end{definition}
 
The main theorem of this paper states as follows.   

\begin{theorem}\label{main1}Let $(S_{g,n},\Sigma_{g,n})$ be a surface with boundary, which  has a finite polygonal cellular decomposition. 
By $V_{g,n}$, $E_{g,n}$ and $F_{g,n}$ we denote the sets of vertices, edges and faces, respectively. Let $F_W$ be the set of faces having at least one vertex in $W$ for subset $W\subset V_{g,n}$. Then there exists a surface $\tilde{S}$ with  generalized circle packing metric defined as above having  total geodesic curvatures $(L_1,\cdots,L_{\vert V_{g,n}\vert} )\in\mathbb{R}^{\vert V_{g,n}\vert}_{+}$ on  vertices if and only if $(L_1,\cdots,L_{\vert V_{g,n}\vert})\in\mathcal{L}$, where 
\[\mathcal{L}=\left\{(L_1,\cdots,L_{\vert V_{g,n}\vert})\left\vert\right.  \sum_{w\in W}L_w<\sum_{P\in F_W}\pi \min\{N(P,W),N(P)-2\},~~\text{for each nonempty}\ W\subset
 V_{g,n}\right\}.\]
Moreover, the generalized hyperbolic circle packing  is unique if it exists. 
\end{theorem}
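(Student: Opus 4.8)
\noindent\emph{Proof proposal.} The plan is to realize the whole statement as a global bijectivity result for the map sending curvatures to total geodesic curvatures, via a convex variational principle in the spirit of Colin de Verdière \cite{colin} and Bobenko--Springborn \cite{bo}. I would first settle \emph{necessity}. Fix a packing and write $L_i=\sum_{P\ni v_i}L_i^{P}$, where $L_i^{P}=\int_{C_i\cap\tilde P}k_g\,\D s$ is the total geodesic curvature of the arc of $C_i$ lying in the generalized polygon $\tilde P$. Two Gauss--Bonnet estimates control this decomposition. Applying Gauss--Bonnet to the central interstice $\hat P$ (the part of $\tilde P$ outside every disk $D_i$, a topological disk whose corners are the tangency cusps) yields $\sum_{i\in V(P)}L_i^{P}=(N(P)-2)\pi-\area(\hat P)<(N(P)-2)\pi$. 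A per-vertex estimate $L_i^{P}<\pi$ I would extract from Gauss--Bonnet applied to the lune bounded by $C_i$ and the orthogonal circle $C_P$, using Condition iii of Definition \ref{ccp_polygon} and the right angles it produces at the tangency points. For nonempty $W\subset V_{g,n}$ these combine, since $\sum_{w\in W\cap V(P)}L_w^{P}$ is smaller than both $N(P,W)\pi$ and $(N(P)-2)\pi$, into
\[
\sum_{w\in W}L_w=\sum_{P\in F_W}\ \sum_{w\in W\cap V(P)}L_w^{P}<\sum_{P\in F_W}\pi\min\{N(P,W),\,N(P)-2\},
\]
which is exactly $(L_1,\dots,L_{|V_{g,n}|})\in\mathcal L$.

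For \emph{rigidity} I would build the variational principle. Parametrize packings by a coordinate $u=(u_i)$ depending smoothly on the curvatures $k_i\in\mathbb R_+$ and interpolating in a $C^1$ fashion across the three regimes $k_i\gtrless 1$, the regularity being guaranteed by Lemma \ref{packing on polygon}; then $\Phi\colon u\mapsto(L_1,\dots,L_{|V_{g,n}|})$ is $C^1$. The structural heart is a Schläfli-type identity on each generalized polygon, giving the symmetry $\partial L_i/\partial u_j=\partial L_j/\partial u_i$. Hence $\omega=\sum_i L_i\,\D u_i$ is closed and admits a potential $\mathbb E(u)=\int_{u_0}^{u}\omega$ with $\nabla\mathbb E=(L_i)$. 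I would then prove $\mathbb E$ strictly convex by showing its Hessian, which equals the Jacobian of $\Phi$ and decomposes as a sum of per-polygon blocks, is positive definite; positivity is checked polygon by polygon, with full global rank coming from the connectivity of the $1$-skeleton. Strict convexity makes $\nabla\mathbb E$ injective, which is precisely the asserted uniqueness.

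For \emph{existence}, fix $\hat L\in\mathcal L$ and set $\mathbb E_{\hat L}(u)=\mathbb E(u)-\sum_i\hat L_i u_i$; its critical points are exactly the packings with $L=\hat L$, so it remains to show that the convex function $\mathbb E_{\hat L}$ attains a minimum, i.e. is coercive, precisely when $\hat L\in\mathcal L$. The hard part will be this coercivity/degeneration analysis: computing the asymptotic behaviour of $\Phi$ (equivalently the recession function of $\mathbb E$) along rays $u+te$ as $t\to+\infty$, when circles collapse ($k_i\to\infty$), vertices become ideal ($k_i\to 1$), or polygons flatten, and matching each degeneration direction $e$ (typically an indicator $\mathbf 1_W$) to the corresponding facet $\sum_{w\in W}L_w=\sum_{P\in F_W}\pi\min\{N(P,W),N(P)-2\}$ of $\partial\mathcal L$. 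This should force the asymptotic slopes of $\mathbb E$ to be exactly the Gauss--Bonnet thresholds of the necessity step, so that $\mathbb E_{\hat L}\to+\infty$ in every direction iff the strict inequalities defining $\mathcal L$ hold, giving a (unique) minimizer and hence the packing. Establishing the positive-definiteness of the per-polygon Hessian is the other delicate point; both are where the promised analysis of limit behaviours of circle packings on polygons must do the real work.
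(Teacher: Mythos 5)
Your skeleton largely coincides with the paper's: necessity via Gauss--Bonnet on the interstice, uniqueness via a convex potential in the variables $s_i=\ln k_i$ built from a closed one-form $\sum_i L_{i,P}\,\D s_i$ (Lemmas \ref{bigon}, \ref{closed}), strict convexity checked polygon by polygon (Lemma \ref{convex}), and injectivity of the gradient (Lemma \ref{injective}). Two structural remarks. First, connectivity of the $1$-skeleton is not needed for positive definiteness: each per-polygon Hessian block is strictly diagonally dominant, hence already positive definite on its own variables, so the global sum is positive definite once every vertex lies in a face (Lemma \ref{pro 3.1}). Second, and more importantly, the paper does \emph{not} prove existence by coercivity of $\mathcal{E}_{\hat L}$. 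It proves that $\nabla\mathcal{E}$ is injective and open (Brouwer invariance of domain), that its image lies in $\mathcal{L}$, and then uses the degeneration Lemmas \ref{limita0}--\ref{limita2} to show that along any sequence $s^m$ leaving every compact set the images $\nabla\mathcal{E}(s^m)$ accumulate only on $\partial\mathcal{L}$; hence the image is open and closed in $\mathcal{L}$ and equals $\mathcal{L}$. Coercivity of $\mathcal{E}_{\hat L}$ is obtained only \emph{a posteriori} (existence of the critical point plus Lemma \ref{convex_property}), where it is used for the Ricci flow, not for Theorem \ref{main1}.

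There are two genuine gaps in your plan. (i) Your per-vertex bound $L_{i,P}<\pi$ does not follow from Gauss--Bonnet on the lune $D_i\cap D_P$: with right angles at the two corners and both arcs convex toward the region, Gauss--Bonnet yields the identity $L_{i,P}+L^{i}_{C_P}=\pi+\area(D_i\cap D_P)$, which constrains the \emph{sum} of the two total curvatures and gives no upper bound on $L_{i,P}$ alone. The paper instead gets this bound (Corollary \ref{cor1}) from the monotonicity $\partial L_{i,P}/\partial k_i>0$ together with $\lim_{k_i\to+\infty}L_{i,P}=\pi$ (Lemma \ref{limita2}), or one can use the explicit formulas \eqref{expression} and \eqref{angle_compute}; some such argument is required. (ii) The coercivity step cannot be reduced to indicator directions $\mathbf{1}_W$. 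The recession function of a convex function is sublinear, so positivity on indicators gives no lower bound along a general direction $e$ with unequal positive entries; and an Abel-summation over the level sets of $e$ does not repair this with the available limits, because the degeneration analysis only controls the sum $\sum_{i\in I}L_{i,P}$ over the \emph{full} blow-up set $I$ of a face: when all vertices of a face blow up at different rates, a partial sum over a proper subset of $I$ need not converge to its facet value (it can drop to $0$). So the ray-wise asymptotic slopes you need are not determined by the facet inequalities, and as written your existence argument is incomplete. This is precisely the difficulty that the paper's sequence-based invariance-of-domain argument is designed to bypass, and it is the cleanest way to finish your proof as well.
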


\subsubsection{Combinatorial Ricci flows}
It is meaningful to  study the generalized hyperbolic circle packing on surfaces with the prescribed total geodesic curvature on each vertex.
 Motivated by the combinatorial Ricci flow defined by Chow-Luo \cite{chow-Luo} to find a circle packing with prescribed discrete Gaussian curvatures,
 we introduce the combinatorial Ricci flow for finding a desired generalized circle packing with  prescribed total geodesic curvatures $\{\hat{L}_i\}_{i=1}^{\vert V_{g,n}\vert}$.

 \begin{definition}[Combinatorial Ricci flow]
\begin{align}
\ddt{k_i}=-(L_i-\hat{L}_i)k_i,~~\forall i\in V_{g,n}.\label{ricci_flow}
\end{align}
\end{definition}
For the combinatorial Ricci flow, we have the following theorem.

\begin{theorem}\label{flowthm}
    The flow \eqref{ricci_flow} converges for any given initial value if and only if $(\hat{L}_1,\cdots,\hat{L}_{\vert V_{g,n}\vert})\in\mathcal{L}$, where $\mathcal{L}$ is defined as Theorem \ref{main1}.
    Moreover, if it converges, it will converge exponentially fast to the desired circle packing metric.
\end{theorem}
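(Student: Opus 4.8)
The plan is to rewrite the flow \eqref{ricci_flow} as an autonomous negative gradient flow on a full linear space and then read off its asymptotics from the convex variational structure underlying Theorem \ref{main1}. First I would pass to the logarithmic coordinates $u_i=\log k_i$. Since Lemma \ref{packing on polygon} guarantees that a generalized circle packing, hence a value $L_i$ of the total geodesic curvature, is defined for every $k=(k_1,\dots,k_{|V_{g,n}|})\in\mathbb{R}_+^{|V_{g,n}|}$, this change of variables identifies the configuration space with all of $\mathbb{R}^{|V_{g,n}|}$, and \eqref{ricci_flow} becomes the autonomous system
\[
\ddt{u_i}=\hat{L}_i-L_i(u),\qquad i\in V_{g,n},
\]
where $u\mapsto L(u)$ is $C^1$ by Lemma \ref{packing on polygon}. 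The key structural input, supplied by the variational principle that proves Theorem \ref{main1}, is a $C^2$ potential $\mathcal{F}(u)$ with $\partial\mathcal{F}/\partial u_i=L_i$ whose Hessian $\Hessian\,\mathcal{F}=(\partial L_i/\partial u_j)$ is everywhere symmetric positive definite; symmetry is the integrability condition yielding $\mathcal{F}$, while positive definiteness is the infinitesimal rigidity behind the uniqueness in Theorem \ref{main1}. Setting $\Phi(u)=\mathcal{F}(u)-\sum_i\hat{L}_iu_i$, the flow is precisely $\dot u=-\nabla\Phi(u)$, so $\Phi$ is nonincreasing, with $\ddt{}\Phi=-|\nabla\Phi|^2$.

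For the necessity direction I would argue directly. If the flow converges to a finite limit $u_\infty$, then $u_\infty$ must be an equilibrium, since otherwise some component $\hat L_i-L_i(u)$ is bounded away from zero near $u_\infty$ by continuity and $u_i(t)$ could not converge. Hence $L(u_\infty)=\hat L$, so $u_\infty$ realizes a generalized circle packing metric whose total geodesic curvatures equal $\hat L$, and the existence part of Theorem \ref{main1} forces $\hat L\in\mathcal L$.

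For the sufficiency direction, assume $\hat L\in\mathcal L$. By Theorem \ref{main1} there is a unique $u^\ast$ with $L(u^\ast)=\hat L$, that is, a unique critical point of $\Phi$, which by convexity is its global minimum. The central analytic point is that a $C^2$ function on $\mathbb{R}^{|V_{g,n}|}$ whose Hessian is everywhere positive definite and which possesses a critical point is automatically proper: along every ray from $u^\ast$ the restriction of $\Phi$ is strictly convex with vanishing initial derivative, hence grows at least linearly, and coercivity along all rays gives coercivity of the convex function. Properness makes every sublevel set $\{\Phi\le c\}$ compact, which yields long-time existence (no escape to infinity in finite time) and, together with $\Phi$ being decreasing and bounded below, convergence of $u(t)$ to $u^\ast$ by the standard convergence argument for gradient flows of proper convex functions.

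For the exponential rate I would track the energy $\mathcal{E}(t)=\tfrac12\sum_i\bigl(L_i(u(t))-\hat L_i\bigr)^2=\tfrac12|\nabla\Phi|^2$ and compute
\[
\ddt{}\mathcal{E}=-\sum_{i,j}\bigl(L_i-\hat L_i\bigr)\frac{\partial L_i}{\partial u_j}\bigl(L_j-\hat L_j\bigr)\le -2\lambda\,\mathcal{E},
\]
where $\lambda>0$ is a uniform lower bound for the eigenvalues of $\Hessian\,\mathcal{F}$ on the compact sublevel set containing the trajectory. Gronwall's inequality then gives $\mathcal{E}(t)\le\mathcal{E}(0)e^{-2\lambda t}$, and since $L$ is a diffeomorphism onto $\mathcal L$ this transfers to exponential convergence of $u(t)$, hence of $k(t)$ and of the circle packing metric, to the target. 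The hard part will be the global control of the flow, namely excluding finite-time blow-up and escape to infinity as $t\to\infty$; both reduce to properness of $\Phi$, so the real work lies in verifying that the variational potential $\mathcal{F}$ is strictly convex on the entire space $\mathbb{R}^{|V_{g,n}|}$, which is exactly where the rigidity content of Theorem \ref{main1} is used.
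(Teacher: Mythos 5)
Your proposal is correct and follows essentially the same route as the paper: logarithmic coordinates turn \eqref{ricci_flow} into the negative gradient flow of the strictly convex potential $\mathcal{E}_{\hat{L}}=\mathcal{E}-\sum_i\int\hat{L}_i\,\mathrm{d}s_i$, Theorem \ref{main1} supplies the unique critical point exactly when $\hat{L}\in\mathcal{L}$, properness of the convex potential (which the paper cites from \cite{Gexu} as Lemma \ref{convex_property} and which you re-derive via the ray argument) confines the trajectory to a compact set, and a uniform positive lower bound on the Hessian eigenvalues there together with Gronwall gives the exponential rate. The only cosmetic difference is in the ``only if'' direction, where you argue directly that a finite limit must be an equilibrium, while the paper extracts intermediate times via the mean value theorem; both yield the same conclusion.
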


The paper is organized as follows: In Section 2, we give the variational principle for generalized circle packings. In Section 3,  we derive Theorem \ref{main1} by analyzing the limit behaviour of polygons. In Section 4, we introduce some basic properties of the flow (\ref{ricci_flow}) and obtain Theorem \ref{flowthm}.

\section{Variational principle for generalized circle packing}\label{section2}

\subsection{Admissible space}
In this section, we will give the proof of Lemma \ref{packing on polygon}.
First, we introduce variational principles for a bigon formed by two generalized circle arcs $\gamma,\eta$ intersecting perpendicularly as shown in Figure \ref{Bigons}. The following lemma  is basic on the analysis in \cite{BHS}.

\begin{lemma}\label{vari_angle}
    Let $k_{\gamma},k_{\eta}$ be the geodesic curvatures of two generalized circle arcs $\gamma,\eta$ intersecting perpendicularly. By $\alpha$ and $\theta$ we denote the (generalized) inner angles of $\gamma$ and $\eta$. Moreover, $\alpha$ and $\theta$ can be viewed as functions of $k_{\eta}$ and $k_{\gamma}.$ Then for $k_{\eta}\in (1,\infty)$, \begin{align}\frac{\partial\theta}{\partial k_{\eta}}\label{d1}&=\frac{2k_{\eta}k_{\gamma}}{\sqrt{k_{\eta}^2-1}(k_{\gamma}^2+k_{\eta}^2-1)}>0,\\
    \pp{\theta}{k_{\gamma}}&=-\frac{\sqrt{k_{\eta}^2-1}}{k_{\gamma}^2+k_{\eta}^2-1}<0.\label{d2}
    \end{align}
    \
    Moreover, fixing $k_{\gamma}$, we have
    \begin{align*}
        \lim_{k_{\eta}\rightarrow 1^+}\theta(k_{\gamma},k_{\eta})=0,~~
        \lim_{k_{\eta}\rightarrow +\infty}\theta(k_{\gamma},k_{\eta})&=\pi.
    \end{align*}
\end{lemma}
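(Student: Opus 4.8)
The plan is to reduce the lemma to an explicit closed-form expression for the generalized inner angle $\theta$ as a function of $(k_{\gamma},k_{\eta})$, after which \eqref{d1}, \eqref{d2} and the two limits become elementary calculus. Since the hypotheses fix $k_{\eta}\in(1,\infty)$, the arc $\eta$ is a genuine hyperbolic circle, say with center $O_{\eta}$ and radius $r_{\eta}$ determined by $k_{\eta}=\coth r_{\eta}$; by the entry for circles in Table~\ref{relation} its generalized inner angle $\theta$ is exactly the hyperbolic central angle subtended at $O_{\eta}$ by the sub-arc of $\eta$ bounding the orthogonal bigon. My first step is therefore to compute this central angle.

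First I would treat the case in which $\gamma$ is also a circle (so $k_{\gamma}>1$). Let $X$ be one of the two intersection points and $O_{\gamma}$ the center of $\gamma$. Orthogonality of the two circles means their tangent lines at $X$ are perpendicular, equivalently the radii $O_{\eta}X$ and $O_{\gamma}X$ are perpendicular, so the geodesic triangle $O_{\eta}XO_{\gamma}$ has a right angle at $X$ with legs $r_{\eta}$ and $r_{\gamma}$. Reflecting across the geodesic through $O_{\eta}$ and $O_{\gamma}$ shows that the full central angle is $\theta=2\angle(XO_{\eta}O_{\gamma})$, and the standard hyperbolic right-triangle relation $\tan\angle(XO_{\eta}O_{\gamma})=\tanh r_{\gamma}/\sinh r_{\eta}$, together with $\sinh r_{\eta}=1/\sqrt{k_{\eta}^2-1}$ and $\tanh r_{\gamma}=1/k_{\gamma}$, gives
\[ \tan\frac{\theta}{2}=\frac{\sqrt{k_{\eta}^2-1}}{k_{\gamma}}. \]
The substance of the argument is to show that this same formula persists when $\gamma$ is a horocycle ($k_{\gamma}=1$) or a hypercycle ($k_{\gamma}<1$), where $O_{\gamma}$ degenerates to an ideal point or to the axis geodesic and the triangle computation no longer applies verbatim; I expect this uniformity to be the main obstacle.

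To overcome it I would pass to the hyperboloid model $\{x:\langle x,x\rangle=-1\}$ in $\mathbb{R}^{2,1}$ and represent each generalized circle $C$ by the vector $v$ for which $C=\{x:\langle x,v\rangle=-1\}$. A direct check against Table~\ref{relation} shows that a cycle of curvature $k$ corresponds to $\langle v,v\rangle=k^{-2}-1$, which is negative, null, or positive precisely in the circle, horocycle, and hypercycle cases, while perpendicular intersection is the single condition $\langle v_{\eta},v_{\gamma}\rangle=-1$. Normalizing $v_{\eta}$ at the north pole so that $\eta$ is parametrized by its central angle, and then solving $\langle x,v_{\gamma}\rangle=-1$ for the two intersection parameters, reproduces $\tan(\theta/2)=\sqrt{k_{\eta}^2-1}/k_{\gamma}$ with no case distinction, which is exactly the uniform statement needed. (Alternatively one can run three short computations using the distances in cases A--D of the introduction, as in \cite{BHS}, but the Minkowski computation avoids the case split.)

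With the closed form $\theta=2\arctan\!\bigl(\sqrt{k_{\eta}^2-1}/k_{\gamma}\bigr)$ in hand the remainder is routine. Differentiating in $k_{\eta}$ and in $k_{\gamma}$ produces the two partial derivatives \eqref{d1} and \eqref{d2}, and their signs are immediate from the positivity of $k_{\eta}$, $k_{\gamma}$, $\sqrt{k_{\eta}^2-1}$ and of $k_{\gamma}^2+k_{\eta}^2-1$. Finally, holding $k_{\gamma}$ fixed, the argument $\sqrt{k_{\eta}^2-1}/k_{\gamma}$ tends to $0$ as $k_{\eta}\to1^+$ and to $+\infty$ as $k_{\eta}\to+\infty$, so $\theta\to 2\arctan 0=0$ and $\theta\to 2\arctan(+\infty)=\pi$ respectively, which establishes the two stated limits. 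The companion angle $\alpha$ of $\gamma$ is obtained by interchanging the roles of the two arcs in the same computation.
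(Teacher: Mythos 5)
Your proposal is correct, and at the level of overall strategy it is the same as the paper's proof: both reduce the lemma to the closed-form relation $\cot\frac{\theta}{2}=k_{\gamma}/\sqrt{k_{\eta}^{2}-1}$ (your $\tan\frac{\theta}{2}=\sqrt{k_{\eta}^{2}-1}/k_{\gamma}$), after which the derivatives and limits are elementary calculus. Where you differ is in how this relation is established across the three types of $\gamma$. The paper argues case by case: for $k_{\gamma}>1$ it quotes the hyperbolic right-triangle law (your triangle $O_{\eta}XO_{\gamma}$ is exactly its \eqref{theta}); for $k_{\gamma}<1$ the corresponding quadrilateral relation \eqref{tt}; and for $k_{\gamma}=1$ the semi-ideal quadrilateral relation \eqref{tt2}. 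You instead handle all three at once in the hyperboloid model, encoding a generalized circle of curvature $k$ as a vector $v$ with $C=\{x:\langle x,v\rangle=-1\}$ and $\langle v,v\rangle=k^{-2}-1$, and perpendicularity as $\langle v_{\eta},v_{\gamma}\rangle=-1$; these normalizations are correct, and since the intersection computation uses nothing about $\gamma$ beyond $\langle v_{\gamma},v_{\gamma}\rangle$, the case split genuinely disappears. This buys uniformity at the cost of setting up the Minkowski formalism, which the paper never needs; the paper's route stays inside the elementary hyperbolic trigonometry it already uses elsewhere, following \cite{BHS}.

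One caveat you should flag rather than gloss over: differentiating $\theta=2\arctan\bigl(\sqrt{k_{\eta}^{2}-1}/k_{\gamma}\bigr)$ in $k_{\gamma}$ gives
\[
\pp{\theta}{k_{\gamma}}=-\frac{2\sqrt{k_{\eta}^{2}-1}}{k_{\gamma}^{2}+k_{\eta}^{2}-1},
\]
which is twice the right-hand side of \eqref{d2} as printed, so your differentiation does not literally ``produce'' \eqref{d2}. The defect is in the statement, not in your method: \eqref{d1} and \eqref{d2} cannot both be exact, since the mixed partials $\partial^{2}\theta/\partial k_{\gamma}\partial k_{\eta}$ computed from them differ by a factor of $2$, and the paper itself works with the factor-$2$ version later (in the proof of Lemma \ref{packing on polygon}, and through $g_{x}$ in the proof of Lemma \ref{area_change}). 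Since only the sign of \eqref{d2} is ever used, this is harmless, but a careful write-up should state the corrected constant. Finally, your closing remark that $\alpha$ is obtained ``by interchanging the roles of the two arcs'' is only literally true when $k_{\gamma}>1$; for $k_{\gamma}\le 1$ the generalized inner angle of $\gamma$ is a length rather than an angle (an $\arccoth$, as in \eqref{angle_compute}). The lemma's displayed assertions concern $\theta$ only, so this does not affect your proof.
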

\begin{proof} 
    We prove the Lemma by analyzing the generalized quadrilateral in two perpendicular circles as shown in Figure \ref{Bigons}. By $Q_{k_{\gamma},k_{\eta}}$ we denote those quadrilaterals.
    Set $r_{\eta}$  the radius of  $\eta$. Then we have $$k_{\eta}=\coth {r_{\eta}},$$ for $k_{\eta}>1$. There are three cases as follows, depicted as Figure \ref{Bigons} .
    \begin{enumerate}[1.]
\item
    Suppose $k_{\gamma}>1$, then $k_{\gamma}=\coth{r_{\gamma}}.$
   By the law of the hyperbolic triangle, we have
    \begin{align}\cot\frac{\theta}{2}=\coth r_{\gamma}\sinh r_{\eta}=\frac{k_{\gamma}}{\sqrt{k_{\eta}^2-1}}\label{theta}.\end{align} 
     Differentiating both sides of (\ref{theta}) with respect to $k_{\eta}$ and $k_{\gamma}$,  we  have
\begin{align*}
    -\frac{1}{2\sin^2\frac{\theta}{2}}\frac {\partial\theta}{\partial k_{\eta}}&=-\frac{k_{\eta}k_{\gamma}}{(k_{\eta}^2-1)^{\frac{3}{2}}}
, \\ -\frac{1}{2\sin^2\frac{\theta}{2}}\frac {\partial\theta}{\partial k_{\gamma}}&=\frac{1}{\sqrt{k_{\eta^2-1}}}.
 \end{align*}   
Therefore,
\begin{align*}
    \pp{\theta}{k_{\eta}}&=\frac{2k_{\eta}k_{\gamma}}{\sqrt{k_{\eta}^2-1}(k_{\gamma}^2+k_{\eta}^2-1)},\\
    \pp{\theta}{k_{\gamma}}&=-\frac{\sqrt{k_{\eta}^2-1}}{k_{\gamma}^2+k_{\eta}^2-1}.
\end{align*}
Then we know  
$\frac{\partial\theta}{\partial k_{\eta}}>0$, when $k_{\eta}>1.$
Moreover, by (\ref{theta}) we have
$$\cot{\frac{\theta}{2}}\rightarrow+\infty,~~\text{when}~k_{\eta}\rightarrow 1^+.$$ Hence $\theta\rightarrow 0$ as $k_{\eta}\rightarrow 1^+$.
Moreover, due to \eqref{theta}, we also have $  \theta\rightarrow \pi$ as $k_{\eta}\rightarrow+\infty.$ Thus we finish the case of $k_{\gamma}>1.$

\item Suppose $k_{\gamma}<1,$ then $k_{\gamma}=\tanh r_{\gamma}$.
By the law of the hyperbolic quadrilateral, we obtain
\begin{align}
\cot\frac{\theta}{2}=\tanh r_{\gamma}\sinh r_{\eta}=\frac{k_{\gamma}}{\sqrt{k_{\eta}^2-1}}\label{tt}.
\end{align}
Viewing $\theta$ as function of $k_{\gamma}$ and $k_{\eta}$, $\theta$ has the same expression as shown in \eqref{theta} and \eqref{tt}. Therefore, \eqref{d1} and \eqref{d2} also hold. 

    Moreover, by \eqref{tt} we have
            \[\lim_{k_{\eta}\rightarrow 1^+}\theta(k_{\gamma},k_{\eta})=0, \quad   \lim_{k_{\eta}\rightarrow +\infty}\theta(k_{\gamma},k_{\eta})=\pi.\]
  \item 
Suppose $k_{\gamma}=1.$ Then we can also obtain a semi-ideal hyperbolic quadrilateral. In this case, we have 
\begin{align}\label{tt2}
\cot{\frac{\theta}{2}}=\sinh{r_{\eta}}=\frac{1}{\sqrt{k_{\eta}^2-1}}. 
\end{align}
Therefore, similar to the arguments of other cases, \eqref{d1} and \eqref{d2} also hold. Moreover, by \eqref{tt2} we also have
            \[\lim_{k_{\eta}\rightarrow 1^+}\theta(k_{\gamma},k_{\eta})=0, \quad   \lim_{k_{\eta}\rightarrow +\infty}\theta(k_{\gamma},k_{\eta})=\pi.\]
  \end{enumerate}
Thus, we complete the proof of this 
 lemma.
\end{proof}

\begin{figure}[htbp]
\centering
\includegraphics[scale=0.40]{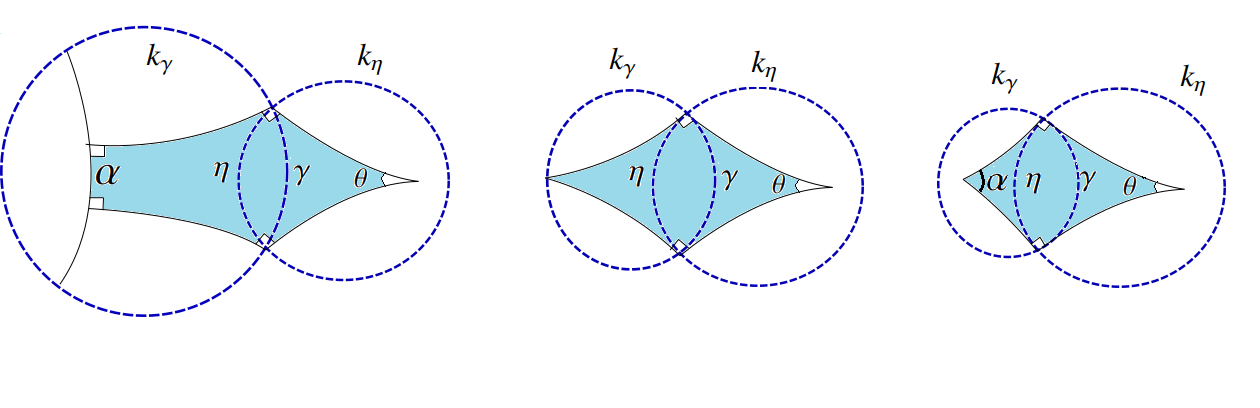}
\caption{ Generalized hyperbolic quadrilaterals in two perpendicular circles $Q_{k_{\gamma}k_{\eta}}$.}
\label{Bigons}
\end{figure}

Next, we will give  the proof of Lemma \ref{packing on polygon}.

\begin{proof}[Proof of Lemma \ref{packing on polygon}]
    
    Suppose that $P$ is an abstract polygon with $n$ vertices and $C_1,\cdots,C_n$ are generalized circles with geodesic curvatures $(k_1,\cdots,k_n)\in \mathbb{R}^{n}_{+}$. 
    For each $i$, we can construct a generalized hyperbolic quadrilateral $Q_{k_i,k'}$ , where  $k'\in(1,\infty)$ is the geodesic curvature of a circle $C'$. By $\theta_{i}=\theta_i(k_i,k')$ we denote the inner angle at the center of $C'$ in $Q_{k_i,k'}$.
    Gluing all the quadrilaterals together at the center of $C'$, we can get a hyperbolic cone metric with a cone angle $\theta(k_1,...,k_n;k'):=\sum_{i=1}^{n}\theta_{i}(k_i,k').$
    For finding a generalized circle packing $\{C_i\}_{i=1}^n$ of $P$ with geodesic curvatures $\{k_i\}_{i=1}^n$, we need to show that there exists some $k'$ such that
    \[
    \sum_i\theta_{i}(k_i,k')=2\pi.
    \]
    
     By Lemma \ref{vari_angle},  the cone angles varies from $0$ to $n\pi$ continuously as $k'$ varies in $(1,+\infty)$. And $\theta$  monotonically increases with respect to $k'$. For $n>2$, there exists a unique $k'=k_P$ such that the cone angle equals to $2\pi$. Thus we obtain a generalized circle packing of  $P$ defined in Section \ref{sec1}.
    
    Owing to Lemma \ref{vari_angle}, $\pp{\theta}{k'}>0$. Moreover, we have 
    $$\pp{\theta}{k_i}=-\frac{2\sqrt{(k')^2-1}}{k^{2}_{i}+(k')^{2}-1}<0.$$ Therefore, for each $i$, by the implicit function theorem, $k_P$ is a $C^1$ function of $(k_1,\cdots,k_n).$ Hence we finish the proof of Lemma \ref{packing on polygon}. 
    \end{proof}

\subsection{Convex functionals}
Now we consider the variational principle for total geodesic curvatures. The following lemma can be derived by a direct computation. For the  proof, we refer readers to \cite{BHS}.

\begin{lemma}\label{bigon}
   Let $\gamma$, $\eta$ be two generalized circle arcs to intersect each other perpendicularly, whose geodesic curvatures are  $k_{\gamma}$ and 
   $k_{\eta}$. Denote $l_{\gamma}$ and  $l_{\eta}$ as their corresponding  arc lengths. If $k_{\eta}>1$, then the differential form \[\omega_{\gamma,\eta}=l_{\gamma}dk_{\gamma}+l_{\eta}dk_{\eta}\]
    is closed.
\end{lemma}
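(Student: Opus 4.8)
The plan is to verify that the $1$-form $\omega_{\gamma,\eta}=l_\gamma\,dk_\gamma+l_\eta\,dk_\eta$ is closed by directly checking the integrability (mixed-partials) condition
\[
\pp{l_\gamma}{k_\eta}=\pp{l_\eta}{k_\gamma},
\]
which on the open region $k_\eta\in(1,\infty)$ is exactly the statement that $\omega_{\gamma,\eta}$ is closed. To do this I first need explicit formulas for the arc lengths $l_\gamma,l_\eta$ as functions of $(k_\gamma,k_\eta)$. From Table~\ref{relation} the arc length of a generalized circle is $l=\theta\sinh r$ (circle), $l=\theta\cosh r$ (hypercycle), with the inner angle $\theta$ playing the role of the generalized inner angle, and the radii are recovered from $k=\coth r$ or $k=\tanh r$. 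Here the relevant inner angles are precisely the $\theta$ and $\alpha$ of Lemma~\ref{vari_angle}: $l_\eta$ is governed by the angle $\theta$ subtended at $\eta$ and $l_\gamma$ by the complementary angle $\alpha$ subtended at $\gamma$. So the first step is to write $l_\gamma$ and $l_\eta$ in closed form in terms of $k_\gamma,k_\eta$ using these relations.

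Next I would compute the two mixed partials. The key simplification is that Lemma~\ref{vari_angle} already supplies $\pp{\theta}{k_\gamma}$ and $\pp{\theta}{k_\eta}$ in closed form, and by the perpendicularity the angle $\alpha$ at $\gamma$ satisfies an analogous relation (obtained by swapping the roles of $\gamma$ and $\eta$), so its partials are available by the same computation with indices interchanged. Writing, say in the case $k_\gamma>1$, $l_\gamma=\alpha\sinh r_\gamma=\alpha/\sqrt{k_\gamma^2-1}$ and $l_\eta=\theta/\sqrt{k_\eta^2-1}$, I would differentiate each product by the product rule, substitute the angle-derivative formulas \eqref{d1}--\eqref{d2} (and their $\alpha$-analogues), and check that the two sides coincide. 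I expect both $\pp{l_\gamma}{k_\eta}$ and $\pp{l_\eta}{k_\gamma}$ to reduce, after clearing the common denominator $k_\gamma^2+k_\eta^2-1$, to the same symmetric rational expression in $k_\gamma,k_\eta$; the built-in symmetry of $\omega_{\gamma,\eta}$ under interchanging $(\gamma,l_\gamma,k_\gamma)\leftrightarrow(\eta,l_\eta,k_\eta)$ is what makes the equality plausible and is the structural reason the form is closed.

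The remaining point is that the geodesic curvatures $k_\gamma$ range over all of $\mathbb{R}_+$, so $\gamma$ may be a circle ($k_\gamma>1$), a horocycle ($k_\gamma=1$), or a hypercycle ($k_\gamma<1$), and the arc-length formula and the law used in Lemma~\ref{vari_angle} differ across these three regimes. I would therefore carry out the mixed-partial check case by case, exactly as in the proof of Lemma~\ref{vari_angle}: the cases $k_\gamma>1$, $k_\gamma<1$, and $k_\gamma=1$. The main obstacle I anticipate is not any single computation but ensuring the formulas match consistently across the three regimes and, in particular, across the transitional value $k_\gamma=1$; the cleanest route is to observe that in every case $\theta$ obeys the same functional relation $\cot(\theta/2)=k_\gamma/\sqrt{k_\eta^2-1}$ established in \eqref{theta}, \eqref{tt}, \eqref{tt2}, so the derivative formulas \eqref{d1}--\eqref{d2} hold uniformly and the closedness computation is in fact identical in all three cases. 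Once the single symmetric identity $\pp{l_\gamma}{k_\eta}=\pp{l_\eta}{k_\gamma}$ is verified, closedness of $\omega_{\gamma,\eta}$ on the simply connected region $\{k_\eta>1\}$ follows immediately, completing the proof.
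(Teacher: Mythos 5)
Your proposal is correct, and it is in substance the proof the paper points to: the paper offers no argument for Lemma \ref{bigon} at all beyond the remark that it ``can be derived by a direct computation,'' deferring to \cite{BHS}, and your mixed-partials criterion $\pp{l_\gamma}{k_\eta}=\pp{l_\eta}{k_\gamma}$ is precisely that computation. It does go through: with $l_\eta=\theta/\sqrt{k_\eta^2-1}$ where $\cot(\theta/2)=k_\gamma/\sqrt{k_\eta^2-1}$ (valid in all three regimes by \eqref{theta}, \eqref{tt}, \eqref{tt2}), and with $l_\gamma=\frac{2}{\sqrt{k_\gamma^2-1}}\arccot\frac{k_\eta}{\sqrt{k_\gamma^2-1}}$ for $k_\gamma>1$, $l_\gamma=\frac{2}{\sqrt{1-k_\gamma^2}}\arccoth\frac{k_\eta}{\sqrt{1-k_\gamma^2}}$ for $k_\gamma<1$, and $l_\gamma=2/k_\eta$ for $k_\gamma=1$, both mixed partials reduce in every case to the same symmetric expression $-\frac{2}{k_\gamma^2+k_\eta^2-1}$, exactly the ``symmetric rational expression'' you predicted.

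Two cautions on how you propose to organize the cases. First, your ``cleanest route'' overstates the uniformity: the relation $\cot(\theta/2)=k_\gamma/\sqrt{k_\eta^2-1}$ controls only the coefficient $l_\eta$, hence gives $\pp{l_\eta}{k_\gamma}$ uniformly; the other coefficient $l_\gamma$ obeys a genuinely different law in each regime, and a literal ``swap of the roles of $\gamma$ and $\eta$'' in \eqref{d1}--\eqref{d2} is meaningless when $k_\gamma\le 1$, since it produces the imaginary quantity $\sqrt{k_\gamma^2-1}$. There you must use the hypercycle ($\arccoth$) formula of Lemma \ref{angle_expression} and the horocycle arc length from \cite{BHS}, exactly as your case-by-case fallback plans; the transitional value $k_\gamma=1$ then follows from continuity of the $C^1$ coefficients (equivalently, $l_\gamma$ is analytic as a function of $k_\gamma^2-1$), not from an ``identical'' computation. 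Second, a small point: simple connectivity of the region $\{k_\eta>1\}$ is irrelevant to closedness, which is a local statement equivalent to the mixed-partials identity; it would only be needed to upgrade closed to exact, which is how the paper later uses this lemma to define the potential $\mathcal{E}_P$.
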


With the help of Lemma \ref{bigon}, we have the following closed form for the  polygon of generalized circle packing metric.
\begin{lemma}\label{closed}
  Let $\tilde{P}$ 
 be the generalized polygon of generalized circle packing   with respect to  an abstract polygon $P$,
  where the geodesic curvatures of vertices $v_1,\cdots,v_n$ are $k_1,\cdots,k_{n}\in \mathbb{R}_+.$
  Let $\{C_i\}_{i=1}^n$ be the  generalized circle of the 
geodesic curvature $k_i$ and denote by $l_{i,P}$  the  length of the sub-arc of $C_i\cap\tilde{P}$.  Then the differential form
    \[
    \omega_{\tilde{P}}=\sum_{i=1}^{n}l_{i,P}\mathrm{d}k_i
    \]
     is a closed form.
\end{lemma}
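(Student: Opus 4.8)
The plan is to reduce the closedness of $\omega_{\tilde{P}}$ to the bigon case already established in Lemma \ref{bigon}. Recall from the proof of Lemma \ref{packing on polygon} that the generalized polygon $\tilde{P}$ is assembled from the $n$ quadrilaterals $Q_{k_i,k_P}$, each bounded by the circle arc $C_i$ (curvature $k_i$) and the perpendicular circle $C_P$ (curvature $k_P$), glued around the common center of $C_P$. Here $k_P = k_P(k_1,\dots,k_n)$ is the implicitly defined $C^1$ function from Lemma \ref{packing on polygon} determined by the cone-angle condition $\sum_{i=1}^n \theta_i(k_i,k_P) = 2\pi$. The length $l_{i,P}$ is precisely the length of the arc of $C_i$ inside the quadrilateral $Q_{k_i,k_P}$, i.e. the quantity that Lemma \ref{bigon} calls $l_\gamma$ with $\gamma = C_i$ and $\eta = C_P$.

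First I would write, for each $i$, the bigon one-form from Lemma \ref{bigon} applied to the pair $(\gamma,\eta)=(C_i,C_P)$, namely $\omega_i = l_{i,P}\,\mathrm{d}k_i + l_{P,i}\,\mathrm{d}k_P$, where $l_{P,i}$ is the length of the sub-arc of $C_P$ bounding $Q_{k_i,k_P}$; by Lemma \ref{bigon} each $\omega_i$ is closed (as a form in the two independent variables $k_i,k_P$). Summing over $i$ gives
\[
\sum_{i=1}^n \omega_i = \sum_{i=1}^n l_{i,P}\,\mathrm{d}k_i + \Big(\sum_{i=1}^n l_{P,i}\Big)\,\mathrm{d}k_P .
\]
The key geometric observation is that $\sum_{i=1}^n l_{P,i}$ is the total length of $C_P$ inside $\tilde{P}$, which is a full circle of curvature $k_P$, so this coefficient is an explicit function of $k_P$ alone; call it $L_P(k_P)$. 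Thus $\sum_i \omega_i = \omega_{\tilde{P}} + L_P(k_P)\,\mathrm{d}k_P$, and since $L_P(k_P)\,\mathrm{d}k_P$ is manifestly an exact (hence closed) one-form in the single variable $k_P$, the sum $\sum_i \omega_i$ being closed forces $\omega_{\tilde{P}}$ to be closed.

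The step needing the most care is the change of variables: Lemma \ref{bigon} treats $k_\gamma$ and $k_\eta$ as independent, whereas in $\tilde{P}$ the curvature $k_P$ is not independent but is the constrained function $k_P(k_1,\dots,k_n)$. I would handle this by working on the extended space with coordinates $(k_1,\dots,k_n,k_P)$, where $\bigoplus_i \omega_i$ is closed by Lemma \ref{bigon} and additivity of $\mathrm{d}$, then pulling back along the $C^1$ embedding $(k_1,\dots,k_n)\mapsto (k_1,\dots,k_n,k_P(k_1,\dots,k_n))$. The pullback of a closed form is closed, and one checks that this pullback is exactly $\omega_{\tilde{P}}$ together with the $\mathrm{d}k_P$ term, which after substituting $\mathrm{d}k_P = \sum_j \tfrac{\partial k_P}{\partial k_j}\,\mathrm{d}k_j$ contributes only an exact correction coming from $L_P(k_P)$. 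Concretely, since $L_P(k_P)\,\mathrm{d}k_P = \mathrm{d}\!\big(\int^{k_P} L_P(s)\,\mathrm{d}s\big)$ is exact on the embedded submanifold as well, subtracting it preserves closedness, yielding that $\omega_{\tilde{P}} = \sum_{i=1}^n l_{i,P}\,\mathrm{d}k_i$ is closed. The main obstacle is thus bookkeeping the constrained versus unconstrained variables cleanly; once that is organized, the result follows from Lemma \ref{bigon} by linearity and the exactness of the single-variable $k_P$-term.
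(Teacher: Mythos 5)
Your proposal is correct and follows essentially the same route as the paper: the paper likewise applies Lemma \ref{bigon} to each pair $(C_i,C_P)$, sums the resulting closed forms $l_{i,P}\,\mathrm{d}k_i + l_{C_P}^{i}\,\mathrm{d}k_P$, and subtracts the closed form $\omega_{C_P}=l_{C_P}\,\mathrm{d}k_P$, using exactly your observation that $\sum_{i=1}^{n} l_{C_P}^{i}=l_{C_P}$ is the full circumference of $C_P$ and hence a function of $k_P$ alone. Your explicit pullback bookkeeping for the constrained variable $k_P(k_1,\cdots,k_n)$ is a point the paper leaves implicit, but the underlying argument is the same.
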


\label{Bi}

\begin{proof}

Due to Lemma \ref{packing on polygon}, there exists 
   a circle $C_P$ intersecting to $C_i$ $(i=1,\cdots, n)$   perpendicularly, whose geodesic curvature $k_P$ is a function of $(k_1,\cdots,k_n)\in\mathbb{R}^n_+.$ Let $l_{C_{P}}$ be the circumference of $C_P$, which is a $C^1$ function of $k_P.$
    Then the form
    \[\omega_{C_P}=l_{C_{P}}dk_P\]
    is a closed form.

    Set $l_{C_P}^{i}$  the length of $C_P$ contained in the interior of $C_i$, depicted in Figure \ref{generalcircle}. Then by Lemma \ref{bigon}, the form
    \[l_{i,P}\mathrm{d}{k_i}+l_{C_{P}}^{i}\mathrm{d}k_P\]
    is  closed. Since $l_{C_{P}}=\sum_{i=1}^{n}l^{i}_{C_{P}}$, we obtain that the form 
    \[\omega_{\tilde{P}}+\omega_{C_P}=\sum_{i=1}^{n}(l_{i,P}\mathrm{d}k_i+l^{i}_{C_{P}}\D k_P). \]
    is a closed. 
    Therefore, $\omega_{\tilde{P}}$ is a closed form.
\end{proof}
  Denote  $L_{i,P}$ as the total geodesic curvature of the sub-arc $C_i\cap \tilde{P}$.
According to the definition of   the total geodesic curvature, we know that $L_{i,P}=l_{i,P}\cdot k_{i}$. Thus  $L_{i,P}$ is  smooth with respect to  $(k_1,\cdots ,k_n)$.
Set $s_i=\ln k_i$. Since 
$\sum_{i=1}^nl_{i,P}\D k_{i}$ is closed, the form 
$\omega=\sum_{i=1}^n L_{i,P} \D s_{i}$
is also closed.
Hence  the potential function  
\[\mathcal{E}_P(s)=\int^{s}_{s_0}\omega\]
 is well-defined on $\mathbb{R}^{n}$.

\begin{lemma}\label{gauss}
    Let $C_1,\cdots,C_n$ be a generalized circle packing of an abstract polygon $P$. Let $\Omega_P$ be the region enclosed by those arcs among tangent points. Then 
    \[
    \area(\Omega_P)=(n-2)\pi -\sum_{i=1}^{n}L_{i,P}.
    \]
\end{lemma}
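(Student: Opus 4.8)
The plan is to prove the identity as an instance of the Gauss--Bonnet theorem applied to the curvilinear region $\Omega_P$. First I would record the combinatorial-geometric shape of $\Omega_P$: since the $n$ generalized circles $C_1,\dots,C_n$ are cyclically tangent (Definition \ref{ccp_polygon}, Conditions i--ii) and surround the central interstice, the closure of $\Omega_P$ is a topological disk whose boundary is a closed curve made of exactly $n$ arcs, the $i$-th being the sub-arc $C_i\cap\tilde P$, and whose only non-smooth points are the $n$ tangency points $q_i=C_i\cap C_{i+1}$. Thus $\chi(\Omega_P)=1$ and $\partial\Omega_P$ is piecewise smooth with $n$ corners, and Gauss--Bonnet reads
\[
\int_{\Omega_P} K\,\D A+\int_{\partial\Omega_P} k_g\,\D s+\sum_{j=1}^{n}\varepsilon_j=2\pi\chi(\Omega_P),
\]
where $K\equiv-1$ is the curvature of $\mathbb{H}$ and the $\varepsilon_j$ are the exterior angles.

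Two inputs then finish the computation, and pinning these down is the main work. (a) At each tangency point $q_i$ the arcs of $C_i$ and $C_{i+1}$ share a common tangent line, and $\Omega_P$ occupies the cusp wedge exterior to both generalized disks; hence the interior angle there is $0$ and the exterior angle is $\varepsilon_j=\pi$, so $\sum_{j}\varepsilon_j=n\pi$. (b) Orienting $\partial\Omega_P$ so that $\Omega_P$ lies on the left, each boundary arc is traversed with $\Omega_P$ on the concave (non-disk) side of $C_i$, i.e. opposite to the orientation for which $C_i$ bounds its own generalized disk on the left; since $L_{i,P}=\int_{C_i\cap\tilde P}k_i\,\D s$ is the integral of the \emph{absolute} geodesic curvature, the signed contribution is $-L_{i,P}$, whence $\int_{\partial\Omega_P}k_g\,\D s=-\sum_{i=1}^n L_{i,P}$. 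Substituting into Gauss--Bonnet gives $-\area(\Omega_P)-\sum_i L_{i,P}+n\pi=2\pi$, which is the claim.

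The hard part is to verify (a) and (b) uniformly over the three regimes --- when $C_i$ is a circle ($k_i>1$), a horocycle ($k_i=1$), or a hypercycle ($k_i<1$) --- and to be sure $\overline{\Omega_P}$ is compact with finite area even when some generalized vertices are ideal points or geodesic arcs; the point is that the corners $q_i$ are genuine interior points of $\mathbb{H}$, so $\Omega_P$ never reaches $\partial\mathbb{H}$ and Gauss--Bonnet applies verbatim. To make the signs manifestly positive and to exploit Condition iii, I would alternatively decompose via the orthogonal circle $C_P$: since $C_P$ passes through every $q_i$ perpendicularly to the incident arcs, one has $\Omega_P\subset D_P$ and $D_P\setminus\Omega_P=\bigcup_{i=1}^n W_i$, where each lens $W_i$ is bounded by the arc of $C_i$ and the arc of $C_P$ between $q_{i-1},q_i$, meeting at two right angles. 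Applying Gauss--Bonnet to each $W_i$ (all boundary arcs now traversed with the region on the disk-interior side, so all geodesic-curvature terms positive) yields $\area(W_i)=L^i_{P}+L_{i,P}-\pi$, where $L^i_P$ is the total geodesic curvature of the $C_P$-arc in $W_i$; summing and using $\sum_i L^i_P=L_{C_P}$ together with $\area(D_P)=L_{C_P}-2\pi$ recovers
\[
\area(\Omega_P)=\area(D_P)-\sum_{i=1}^{n}\area(W_i)=(n-2)\pi-\sum_{i=1}^n L_{i,P},
\]
independently of the orientation bookkeeping in (b).
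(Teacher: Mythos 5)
Your proof is correct and takes the same route as the paper: the paper's entire proof of Lemma \ref{gauss} is the one-line observation that the identity follows directly from the Gauss--Bonnet theorem, which is exactly your argument, with the cusp angles $\varepsilon_j=\pi$, the sign of the boundary curvature term, and the alternative $C_P$-lens decomposition (itself matching the paper's use of $l_{C_P}=\sum_i l_{C_P}^i$ in Lemma \ref{closed}) supplied as the details the paper leaves implicit.
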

\begin{proof}
    This lemma is directly obtained by the Gauss-Bonnet theorem.
\end{proof}
Therefore, $\sum_{i=1}^{n}L_{i,P}$ has a geometric interpretation closely related to the area of $\Omega_P.$
Now we come to study $\sum_{i=1}^{n}L_{i,P}$ as a function of $(k_1,\cdots,k_n).$ With the help of Table \ref{relation}, a simple calculation
gives
\begin{align}\label{expression}
L_{i,P}=\left\{
\begin{array}{lc}
\alpha_{i,P}\frac{k_i}{\sqrt{k_i^2-1}},&k_i>1, \\l_{i,P},&k_i=1,\\ \alpha_{i,P}\frac{k_i}{\sqrt{1-k_i^2}},&0<k_i<1,
\end{array}
\right.
\end{align}
where $\alpha_{i,P}$ is the (generalized) inner angle of the arc $C_i\cap\tilde{P}$ and $l_{i,P}$ is its length as in Lemma \ref{closed}.
The explicit expressions of $\alpha_{i,P}$ is given by \cite[Lemma 2.6]{BHS}. Here, we only need to use the following result.
\begin{lemma}
[\cite{BHS}]\label{angle_expression}
    The angle $\alpha_{i,P}$ is a $C^1$-function of $k_i$ and $k_P$. Moreover, we have
    \begin{align}\label{angle_compute}
\alpha_{i,P}(k_i,k_P)=\left\{
\begin{array}{lc}
2\arccot{\frac{k_P}{\sqrt{k_i^2-1}}},&k_i>1, \\ 2\arccoth\frac{k_P}{\sqrt{1-k_i^2}},&0<k_i<1,
\end{array}
\right.
\end{align}
\end{lemma}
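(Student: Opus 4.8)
The plan is to read the generalized inner angle $\alpha_{i,P}$ off the very same perpendicular configuration analyzed in Lemma~\ref{vari_angle}, now taking the circle $C_P$ (curvature $k_\eta = k_P > 1$) as the auxiliary curve $\eta$ and $C_i$ (curvature $k_\gamma = k_i$) as $\gamma$. Lemma~\ref{vari_angle} computed the angle $\theta$ subtended at the center of $\eta = C_P$; here I must instead compute the generalized inner angle $\alpha = \alpha_{i,P}$ of $\gamma = C_i$, which is the angle at the center of $C_i$ when $k_i > 1$ and the length of the axis segment when $0 < k_i < 1$. Thus the two formulas in the statement should emerge from the quadrilaterals $Q_{k_i,k_P}$ by interchanging the roles of the two curves.

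For $k_i > 1$ both $C_i$ and $C_P$ are genuine circles with radii $r_\gamma = \arccoth k_i$ and $r_\eta = \arccoth k_P$. The two perpendicular circles meet in two points, and the quadrilateral joining the two centers to the two intersection points is a symmetric kite with right angles at the intersection points (perpendicular circles have orthogonal radii there). Cutting along the axis of symmetry produces a right hyperbolic triangle with legs $r_\gamma, r_\eta$ in which the angle at the center of $C_i$ is $\alpha/2$. Applying the right-triangle relation $\cot(\alpha/2) = \coth r_\eta \sinh r_\gamma$ (the mirror of the computation giving \eqref{theta}, with the two legs swapped) and substituting $\coth r_\eta = k_P$, $\sinh r_\gamma = (k_i^2-1)^{-1/2}$ gives $\cot(\alpha/2) = k_P/\sqrt{k_i^2-1}$, i.e. $\alpha_{i,P} = 2\arccot\!\big(k_P/\sqrt{k_i^2-1}\big)$.

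The hypercycle case $0 < k_i < 1$ carries the main geometric content, since $C_i$ is now an equidistant curve of radius $r_\gamma = \arctanh k_i$ about its axis and has no center. Here I would set up the Lambert quadrilateral $OMNT$, where $O$ is the center of $C_P$, $T$ a perpendicular intersection point, $N$ the foot of the perpendicular from $T$ to the axis of $C_i$ (so $|TN| = r_\gamma$), and $M$ the foot of the perpendicular from $O$ to the axis. The care point is the perpendicularity at $T$: since the normal of the hypercycle at $T$ is $TN$ and $C_P \perp C_i$, the radius $OT$ is orthogonal to $TN$, so $OMNT$ has right angles at $M, N, T$ and acute angle $\theta/2$ at $O$, with $|MN| = \alpha/2$ by symmetry. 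Splitting along the diagonal $ON$ into right triangles $OMN$ and $ONT$, whose angles at $N$ are complementary, I combine $\cos(\angle ONM) = \tanh(\alpha/2)/\tanh|ON|$, $\sin(\angle ONT) = \sinh r_\eta/\sinh|ON|$ with $\cosh|ON| = \cosh r_\eta\cosh r_\gamma$ to get $\tanh(\alpha/2) = \tanh r_\eta/\cosh r_\gamma$, i.e. $\coth(\alpha/2) = k_P/\sqrt{1-k_i^2}$ after substituting $\coth r_\eta = k_P$ and $\cosh r_\gamma = (1-k_i^2)^{-1/2}$. This is the claimed formula $\alpha_{i,P} = 2\arccoth\!\big(k_P/\sqrt{1-k_i^2}\big)$.

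Finally, for the $C^1$ regularity: on each open range $k_i \in (1,\infty)$ and $k_i \in (0,1)$ the closed forms are compositions of $\arccot$ (resp. $\arccoth$) with real-analytic functions of $(k_i,k_P)$, hence smooth there. The delicate point, and the one place I expect genuine work, is gluing across the horocyclic value $k_i = 1$: one first checks $\alpha_{i,P}\to 0$ from both sides (as $k_i\to 1^\pm$ the argument of $\arccot$, resp. $\arccoth$, tends to $+\infty$), and then compares the one-sided partial derivatives in $k_i$ and $k_P$ using the behaviour of $\sqrt{k_i^2-1}$ and $\sqrt{1-k_i^2}$ near $k_i=1$ to confirm they agree. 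This matching across the horocyclic threshold is exactly the content recorded in \cite[Lemma 2.6]{BHS}, which I would invoke to conclude the $C^1$ assertion.
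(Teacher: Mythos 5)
Your proposal supplies something the paper itself does not contain: the paper never proves this lemma but imports it wholesale from \cite[Lemma 2.6]{BHS}, so there is no internal proof to compare against. Your derivations of the two formulas are correct and self-contained, and they are the natural "mirror" of the paper's own computation in Lemma \ref{vari_angle}: for $k_i>1$ the kite spanned by the two centers and the two orthogonal intersection points splits into right triangles with legs $r_\gamma=\arccoth k_i$, $r_\eta=\arccoth k_P$, and swapping the legs in \eqref{theta} gives $\cot(\alpha_{i,P}/2)=\coth r_\eta\sinh r_\gamma=k_P/\sqrt{k_i^2-1}$; for $0<k_i<1$ your Lambert quadrilateral $OMNT$ (with the right angle at $T$ justified exactly as you argue), cut along the diagonal $ON$ into right triangles with complementary angles at $N$, yields $\tanh(\alpha_{i,P}/2)=\tanh r_\eta/\cosh r_\gamma$, i.e.\ $\coth(\alpha_{i,P}/2)=k_P/\sqrt{1-k_i^2}$. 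Both computations check against the standard right-triangle identities, and the identification of $\alpha_{i,P}$ with twice the axis segment $|MN|$ agrees with the paper's definition of the generalized inner angle of a hypercycle.

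The flaw is in your final paragraph. The derivative matching you propose to "confirm" at the horocyclic value is false: from the closed formulas, $\partial\alpha_{i,P}/\partial k_i=2k_Pk_i/\big((k_P^2+k_i^2-1)\sqrt{k_i^2-1}\big)\to+\infty$ as $k_i\to1^+$, while $\partial\alpha_{i,P}/\partial k_i=-2k_Pk_i/\big((k_P^2+k_i^2-1)\sqrt{1-k_i^2}\big)\to-\infty$ as $k_i\to1^-$. So although $\alpha_{i,P}$ extends continuously by $0$ across $k_i=1$, it has a cusp there (increasing in $k_i$ on $(1,\infty)$, decreasing on $(0,1)$), and no $C^1$ gluing exists to be outsourced to \cite{BHS}. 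The $C^1$ assertion in the lemma can only be read on the two open ranges $k_i\in(0,1)$ and $k_i\in(1,\infty)$, where, as you note, it is immediate from the formulas. The quantity that genuinely is $C^1$ across the horocyclic threshold is not the angle but the total geodesic curvature $L_{i,P}=\alpha_{i,P}\,k_i/\sqrt{\vert k_i^2-1\vert}$, whose one-sided limits both equal $2/k_P$; that is the regularity the paper actually uses later, e.g.\ in case 3 of the proof of Lemma \ref{area_change}. Correcting your last paragraph to this statement (or simply restricting the $C^1$ claim to the two open ranges) closes the gap.
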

Then we have the following Lemma.
\begin{lemma}\label{area_change}
Let $C_1,\cdots,C_n$ be a generalized circle packing of an abstract polygon $P$ with geodesic curvatures $(k_1,\cdots,k_n)\in \mathbb{R}_{+}^{n}.$ Let $\Omega_P$ be defined as above. Then 
\[
\pp{(\sum_{j=1}^{n}L_{j,P})}{k_i}>0, ~~ 1\leq i\leq n.
\]
As a consequence, 
\[
\pp{\area{(\Omega_P})}{k_i}<0, ~~ 1\leq i\leq n.
\]
\end{lemma}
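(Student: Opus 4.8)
The plan is to reduce the two displayed inequalities to a single estimate and then control one partial derivative by splitting it into a ``direct'' and an ``indirect'' contribution. By Lemma~\ref{gauss} we have $\area(\Omega_P)=(n-2)\pi-\sum_{j}L_{j,P}$, so $\partial_{k_i}\area(\Omega_P)=-\partial_{k_i}\bigl(\sum_j L_{j,P}\bigr)$ and the second assertion is immediate from the first. Writing $T:=\sum_{j=1}^n L_{j,P}$ and regarding each $L_{j,P}=L_{j,P}(k_j,k_P)$ as a function of its own curvature and of the auxiliary curvature $k_P=k_P(k_1,\dots,k_n)$ produced by Lemma~\ref{packing on polygon}, the chain rule gives
\[
\frac{\partial T}{\partial k_i}=\frac{\partial L_{i,P}}{\partial k_i}\Big|_{k_P}+\Bigl(\sum_{j=1}^n\frac{\partial L_{j,P}}{\partial k_P}\Bigr)\frac{\partial k_P}{\partial k_i},
\]
where the first (direct) term differentiates the explicit formula \eqref{expression} with $k_P$ frozen, and the second (indirect) term records the response of $k_P$.

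First I would dispose of the indirect term, where a convenient cancellation occurs. Differentiating the angle condition $\sum_j\theta_j(k_j,k_P)=2\pi$ of Lemma~\ref{packing on polygon} implicitly, and substituting the derivatives of Lemma~\ref{vari_angle} (taken with $k_\eta=k_P$, $k_\gamma=k_j$), shows $\partial_{k_i}\theta_i<0$ and $\partial_{k_P}\theta_j>0$, whence $\partial k_P/\partial k_i>0$. Setting $D_j:=k_j^2+k_P^2-1>0$, a short computation from \eqref{angle_compute} gives the uniform formula $\partial_{k_P}L_{j,P}=-2k_j/D_j$, while $\partial_{k_P}\theta_j$ carries the very same weight $k_j/D_j$. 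Consequently the factors $\sum_j k_j/D_j$ cancel between numerator and denominator of the indirect term (the overall normalization of $\theta$ canceling in the ratio $\partial k_P/\partial k_i$), and the whole indirect contribution collapses, independently of all the remaining curvatures, to the single clean quantity $-\dfrac{k_P^2-1}{k_P D_i}$.

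It then remains to bound the direct term below by $\dfrac{k_P^2-1}{k_P D_i}$, and this is where the real difficulty lies. Differentiating $L_{i,P}=\alpha_{i,P}\,k_i/\sqrt{|k_i^2-1|}$ yields a positive multiple of the generalized angle $\alpha_{i,P}$ together with a term of the opposite sign, so the two compete and no crude sign argument closes the gap. The hard part is to estimate $\alpha_{i,P}$; I would do this with the elementary inequalities $\arccot x<1/x$ for $x>0$ and $\arccoth x>1/x$ for $x>1$ (both equivalent to $\arctan t<t<\arctanh t$ on $0<t<1$), applied to $\alpha_{i,P}=2\arccot\bigl(k_P/\sqrt{k_i^2-1}\bigr)$ when $k_i>1$ and to $\alpha_{i,P}=2\arccoth\bigl(k_P/\sqrt{1-k_i^2}\bigr)$ when $0<k_i<1$. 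In each regime, after inserting this bound and clearing denominators, the combined (direct plus indirect) expression reduces to a polynomial numerator that simplifies, using $k_i^2=1+(k_i^2-1)$ respectively $k_i^2=1-(1-k_i^2)$, to exactly $(k_P^2-1)\,|k_i^2-1|$; dividing by the denominator $|k_i^2-1|\,D_i\,k_P$ gives $\partial_{k_i}T>\dfrac{k_P^2-1}{k_P D_i}>0$, with the borderline case $k_i=1$ following by continuity of $\partial_{k_i}T$ and of the lower bound. The one genuinely delicate point is the exact matching between the lower bound extracted for the direct term and the precise value of the indirect term; granting that, the rest is routine differentiation.
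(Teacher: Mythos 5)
Your route is the paper's own, step for step: reduce the area claim to $\pp{(\sum_j L_{j,P})}{k_i}>0$ via Lemma \ref{gauss}, split that derivative by the chain rule into a direct part and an indirect part through $k_P$, evaluate the indirect part by implicit differentiation of the angle condition $\sum_j\theta_j=2\pi$ (with exactly the cancellation of $\sum_j k_j/D_j$ you describe), and control the direct part with $\arccot x<1/x$ and $\arccoth x>1/x$. So there is no difference in strategy; the issue is an arithmetic slip with a real consequence.

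The indirect term does not collapse to $-\frac{k_P^2-1}{k_PD_i}$ but to $-\frac{2(k_P^2-1)}{k_PD_i}$: with your notation $D_j=k_j^2+k_P^2-1$, one has $\partial_{k_P}L_{j,P}=-2k_j/D_j$ and
\[
\pp{k_P}{k_i}=\frac{k_P^2-1}{k_PD_i\sum_{j}k_j/D_j},
\]
so that
\[
\sum_{j=1}^n\partial_{k_P}L_{j,P}\cdot\pp{k_P}{k_i}=-\frac{2(k_P^2-1)}{k_PD_i},
\]
which is the paper's formula \eqref{area1.1}; the factor $2$ carried by $\partial_{k_P}L_{j,P}$ survives the cancellation. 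Correspondingly, the lower bound that $\arccot x<1/x$ (resp.\ $\arccoth x>1/x$) produces for the direct term is $\frac{2(k_P^2-1)}{k_PD_i}$ in both regimes $k_i>1$ and $0<k_i<1$: the matching against the indirect term is \emph{exact}, and the surplus is zero, not $\frac{k_P^2-1}{k_PD_i}$. For $k_i\neq 1$ this is harmless, since $\arctan t<t$ and $\arctanh t>t$ are strict and positivity of $\pp{T}{k_i}$ still follows. But your disposal of the borderline case $k_i=1$ ``by continuity of the lower bound'' rests precisely on the spurious positive margin; with the correct constant the lower bound degenerates to $0$, and continuity yields only $\pp{T}{k_i}\ge 0$ there. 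The paper closes this case by a separate computation, evaluating the limit of the exact expression to get $\lim_{k_i\to 1^+}\pp{T}{k_i}=\frac{2}{3k_P^3}>0$, which uses the third-order term in $\arctan u=u-u^3/3+O(u^5)$ rather than the first-order inequality. You need this extra expansion (or an equivalent argument at $k_i=1$) to complete the proof.
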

    \begin{proof}
We divide $\Omega_P$ into n parts by the geodesic segments  connecting those tangency points of $C_{1},\cdots, C_{n}$ to the center of $C_P$ as shown in Figure \ref{3a}. Each part is a generalized hyperbolic quadrilateral as shown in Figure \ref{Bigons}. Denote $S_j$ $(j\neq i)$ as the triangle  whose boundary contain an arc of $C_j$ and the corresponding geodesic segments joining two tangency points of $C_{j}$ to the center of $C_P$. By $\theta_{j,P}$ we denote the inner angle of the arc $S_j\cap C_P$.
For simplicity, we define two auxiliary functions $f$ and $g$.
\[
f(x,y)=\left\{
\begin{array}{lc}
\frac{2x}{\sqrt{x^2-1}}\arccot{\frac{y}{\sqrt{x^2-1}}},&x>1, \\\frac{1}{y},&x=1,\\ 2\frac{x}{\sqrt{1-x^2}}\arccoth{\frac{y}{1-x^2}},&0<x<1,
\end{array}
\right.
\]
and 
\[
g(x,y)=2\arccot{\frac{x}{\sqrt{y^2-1}}}.
\]
$f$ and $g$ are functions of total geodesic curvature and general angle of two perpendicular arcs as in Lemma \ref{vari_angle} and \ref{angle_expression}.
And we can write 
\[
L_{j,P}=f(k_j,k_P(k_1,...,k_n))
.\]

 Naturally, we can get the following partial derivative  
\begin{align}
\pp{(\sum_{j=1}^{n}L_{j,P})}{k_i}&=\sum_{j=1}^{n}f_y(k_j,k_P)\pp{k_P}{k_i}+f_x(k_i,k_P).\label{total_vari}
\end{align}
 By simply computation, we have 
\begin{align*}
f_y(k_j,k_P)=2\frac{k_{j}}{1-k_{P}^{2}-k_{j}^{2}},~~\forall~~ 1\le j\le n.
\end{align*} 
By Lemma \ref{vari_angle} and the implicit function theorem, it has
\[
\pp{k_P}{k_i}=-\frac{g_x(k_i,k_P)}{\sum_jg_y(k_j,k_P)}.
\]
Since $$g_x(k_i,k_P)=\frac{2}{1-k^{2}_{i}-k^{2}_{P}}\sqrt{k_P^2-1}$$ 
and
\begin{align*}
g_y(k_j,k_P)=2\frac{k_{P}k_{j}}{k_{P}^{2}+k_{j}^{2}-1}\cdot \frac{1}{\sqrt{k_{P}^{2}-1}},
\end{align*}
Therefore, we have
\begin{align}
    \sum_{j=1}^{n}f_y(k_j,k_P)\pp{k_P}{k_i}&=-2\sum_{j=1}^{n}\frac{k_j}{1-k_P^2-k_j^2}\cdot\frac{2\frac{\sqrt{k^2_P-1}}{1-k_P^2-k_i^2}}{2\sum_{j=1}^{n}\frac{k_Pk_j}{(k_P^2+k_j^2-1)\sqrt{k_P^2-1}}}\nonumber\\
    &=\frac{2(k_P^2-1)}{k_P(1-k_P^2-k_i^2)}.\label{area1.1}
\end{align}
Now we consider $f_x(k_i,k_P)$.
We calculate it in three cases.
\begin{enumerate}[1.]
    \item We first consider the case as $k_i>1.$
Since 
\begin{align}
\label{area1.2}
f_x(k_i,k_P)=\frac{2k_i^2k_P}{(k_P^2+k_i^2-1)(k_i^2-1)}-\alpha_{i,P}(k_i^2-1)^{-\frac{3}{2}},
\end{align}
by \eqref{area1.1} and \eqref{area1.2}, we have
\[
\pp{(\sum_{j=1}^{n}L_{j,P})}{k_i}=-\alpha_{i,P}(k_i^2-1)^{-\frac{3}{2}}+\frac{2}{(k_i^2-1)k_P}
.\]
Since 
\[
\alpha_{i,P}=2\arctan{\frac{\sqrt{k_i^2-1}}{k_P}}
<\frac{2\sqrt{k_i^2-1}}{k_P},\]
we have 
\[
\pp{(\sum_{j=1}^{n}L_{j,P})}{k_i}>0.
\]
\item Now we consider the case as $0<k_i<1.$
By similar computation, we have 
\begin{align}\label{area2.1}
f_x(k_i,k_P)=\frac{-2k_i^2k_P}{(k_P^2+k_i^2-1)(1-k_i^2)}+\alpha_{i,P}(1-k_i^2)^{-\frac{3}{2}}.
\end{align}
Similarly, by \eqref{area1.1} and \eqref{area2.1}, we obtain
\[
\pp{(\sum_{j=1}^{n}L_{j,P})}{k_i}=-\frac{2}{k_P(1-k_i^2)}+\alpha_{i,P}(1-k_i^2)^{-\frac{3}{2}}.
\]
Since $\alpha_{i,P}=2\arctanh\frac{\sqrt{1-k_i^2}}{k_P}>2\frac{\sqrt{1-k_i^2}}{k_P},$
we have
\[
\pp{(\sum_{j=1}^{n}L_{j,P})}{k_i}>0.
\]
\item 
Finally, we consider the case as $k_i=1$. Since $L_{j,P}$ is $C^1$-function of $(k_1,...,k_n)$, we can take the limit 
\[
\lim_{k_i\rightarrow1^+}\pp{(\sum_{j=1}^{n}L_{j,P})}{k_i}=\frac{2}{3k_P^3}>0.
\]

By Lemma \ref{gauss}, we have 
\[
\pp{\area{(\Omega_P})}{k_i}<0, ~~ 1\leq i\leq n.
\]
Thus we finish  the proof completely.
\end{enumerate}
\end{proof}
\begin{figure}
     \centering
     \begin{subfigure}[c]
     {0.38\textwidth}
     \captionsetup{justification=centering}
         \centering
         \includegraphics[width=0.99\textwidth]{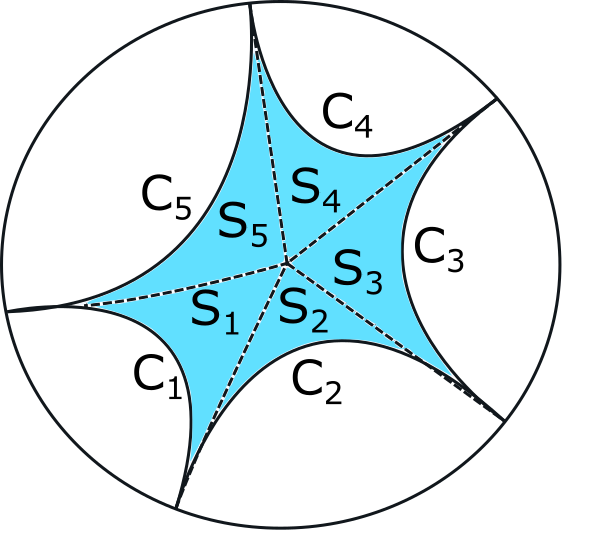}
         \caption{\small  A decomposition of $\Omega_{P}$}
         \label{3a}
     \end{subfigure}
     \begin{subfigure}[c]{0.43\textwidth}
\captionsetup{justification=centering}
         \centering
\includegraphics[width=1.50\textwidth]{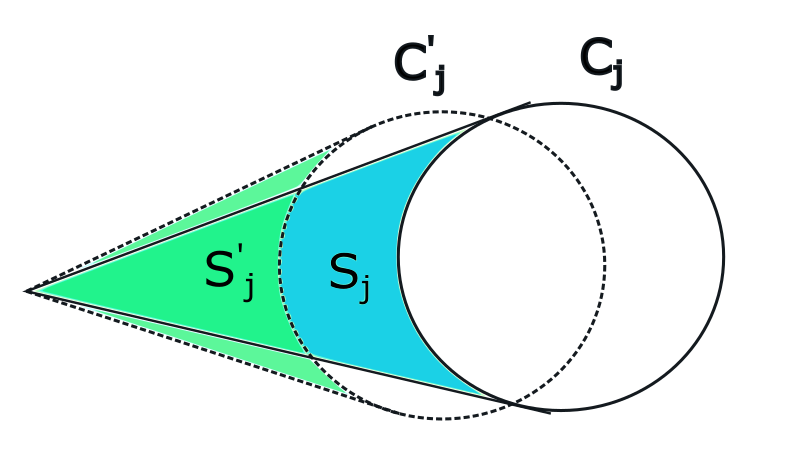}
        \caption{\small A deformation of  $S_{j}, \text{ for } j\neq i$ }
         \label{3b}
     \end{subfigure}
         \caption{\small A deformation of $\Omega_{P}$}
        \label{abcde}
\end{figure}

\begin{lemma}\label{convex}
The potential function $\mathcal{E}_P(s)$ is strictly convex.
\end{lemma}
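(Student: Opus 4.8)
The plan is to prove strict convexity of $\mathcal{E}_P(s)$ by showing that its Hessian is positive definite. Recall that $\mathcal{E}_P(s)=\int^s_{s_0}\omega$ with $\omega=\sum_{i=1}^n L_{i,P}\,\mathrm{d}s_i$ and $s_i=\ln k_i$. Hence the gradient of $\mathcal{E}_P$ in the $s$-coordinates is $\nabla\mathcal{E}_P=(L_{1,P},\cdots,L_{n,P})$, so the Hessian is
\[
\Hessian\mathcal{E}_P=\left(\pp{L_{i,P}}{s_j}\right)_{i,j}=\left(k_j\pp{L_{i,P}}{k_j}\right)_{i,j}.
\]
First I would observe that since $\omega$ is closed (as established after Lemma \ref{closed}), this matrix is symmetric, so it suffices to check positive definiteness. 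The natural strategy is to exhibit the Hessian as a sum of a diagonally dominant part coming from the explicit formulas and an auxiliary term, and then use the monotonicity already proved in Lemma \ref{area_change}.

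The key computational input is the decomposition $L_{i,P}=f(k_i,k_P(k_1,\cdots,k_n))$ used in the proof of Lemma \ref{area_change}. Differentiating, I would write
\[
\pp{L_{i,P}}{k_j}=f_x(k_i,k_P)\delta_{ij}+f_y(k_i,k_P)\pp{k_P}{k_j},
\]
and recall from that proof the explicit expressions $f_y(k_i,k_P)=\tfrac{2k_i}{1-k_P^2-k_i^2}$ together with $\pp{k_P}{k_j}=-g_x(k_j,k_P)/\sum_m g_y(k_m,k_P)$. The plan is to substitute these and thereby express the off-diagonal structure as a rank-one correction to a diagonal matrix. Concretely, the second term has the separable form $a_i b_j$ with $a_i=f_y(k_i,k_P)$ and $b_j=-g_x(k_j,k_P)/\sum_m g_y(k_m,k_P)$, so
\[
\Hessian\mathcal{E}_P=D+\mathbf{a}\mathbf{b}^{\top},
\]
where $D$ is the diagonal matrix with entries $k_i f_x(k_i,k_P)$. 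Symmetry of the full Hessian forces $\mathbf{a}\mathbf{b}^{\top}$ to be (after symmetrization) itself rank one of the form $c\,\mathbf{u}\mathbf{u}^{\top}$; I would verify directly from the formulas that $a_i$ and $b_i$ are proportional, since both $f_y(k_i,k_P)$ and $g_x$/$g_y$ carry the common factor $k_i/(k_P^2+k_i^2-1)$, so the rank-one piece is a genuine scalar multiple of $\mathbf{u}\mathbf{u}^{\top}$ for a single vector $\mathbf{u}$.

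Having reduced to $\Hessian\mathcal{E}_P=D+c\,\mathbf{u}\mathbf{u}^{\top}$, I would determine the sign of $c$ and the positivity of the diagonal $D$. The sign of $c$ is controlled by $\sum_m g_y(k_m,k_P)>0$, which follows from the monotonicity $\pp{\theta}{k_P}>0$ established in Lemma \ref{vari_angle} (equation \eqref{d1}); this is exactly the statement that $k_P$ is well-defined and increasing, so the rank-one term has a definite sign. For the diagonal entries $k_i f_x(k_i,k_P)$, I would reuse the three-case analysis from Lemma \ref{area_change}: the inequalities $\alpha_{i,P}=2\arctan(\sqrt{k_i^2-1}/k_P)<2\sqrt{k_i^2-1}/k_P$ (and its $\arctanh$ analogue for $0<k_i<1$) are precisely what forces each diagonal contribution to the quadratic form to be positive. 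Finally, given $D\succ 0$ and a rank-one correction of the correct sign, positive definiteness of the sum follows: if $c>0$ then $D+c\mathbf{u}\mathbf{u}^{\top}\succ D\succ 0$ immediately, while if $c<0$ one invokes the matrix-determinant (Sherman--Morrison) lemma, so that the whole form stays positive as long as $c\,\mathbf{u}^\top D^{-1}\mathbf{u}>-1$, an inequality I expect to reduce to the same elementary bound on $\alpha_{i,P}$. The main obstacle I anticipate is not the linear algebra but bookkeeping the signs in the three geometric regimes ($k_i>1$, $k_i=1$, $0<k_i<1$) and confirming the clean proportionality $a_i\propto b_i$ uniformly across regimes; once that algebraic identity is pinned down, strict convexity drops out of the sign analysis already carried out in Lemma \ref{area_change}.
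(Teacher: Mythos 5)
Your proposal is correct, but it takes a genuinely different linear-algebraic route from the paper, even though both arguments ultimately rest on Lemma \ref{area_change}. The paper never forms a rank-one decomposition: it shows directly that $\mathrm{M}_{P}=\left[\pp{L_{i,P}}{s_j}\right]$ is a symmetric, strictly diagonally dominant matrix with positive diagonal entries. The off-diagonal entries are negative by a geometric monotonicity argument ($\pp{k_P}{k_i}>0$ from the proof of Lemma \ref{packing on polygon}, so $l_{j,P}$, and hence $L_{j,P}=k_jl_{j,P}$, decreases in $k_i$ for $j\neq i$); the diagonal entries are positive by Gauss--Bonnet (Lemma \ref{gauss}) combined with Lemma \ref{area_change}; and strict dominance is the row-sum identity $\left\vert\pp{L_{i,P}}{s_i}\right\vert-\sum_{j\neq i}\left\vert\pp{L_{j,P}}{s_i}\right\vert=\pp{(\sum_{k}L_{k,P})}{s_i}>0$, which is exactly Lemma \ref{area_change}. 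Your decomposition $\Hessian\mathcal{E}_P=D+c\,\mathbf{u}\mathbf{u}^{\top}$ does hold, with two points you must pin down. First, the proportionality $a_i\propto b_i$ is only true after absorbing the factor $k_j$ coming from $s_j=\ln k_j$: in the $k$-variables the matrix $\left[\pp{L_{i,P}}{k_j}\right]$ is not symmetric, and $g_x(k_j,k_P)$ carries the factor $1/(k_P^2+k_j^2-1)$ rather than $k_j/(k_P^2+k_j^2-1)$; it is $k_jg_x(k_j,k_P)=\sqrt{k_P^2-1}\,f_y(k_j,k_P)$ that gives the clean identity. Second, the sign of $c$ is not ambiguous: one computes $k_j\pp{k_P}{k_j}=\mu f_y(k_j,k_P)$ with $\mu=-\sqrt{k_P^2-1}\big/\sum_m g_y(k_m,k_P)<0$, so the Sherman--Morrison case is the only case, and the scalar inequality $\vert c\vert\,\mathbf{u}^{\top}D^{-1}\mathbf{u}<1$ must actually be checked. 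It does reduce exactly as you anticipate: comparing the $i$-th summand of $\sqrt{k_P^2-1}\sum_i f_y(k_i,k_P)^2/\bigl(k_i f_x(k_i,k_P)\bigr)$ with $g_y(k_i,k_P)$ shows it suffices that $f_x(k_i,k_P)>2(k_P^2-1)/\bigl(k_P(k_P^2+k_i^2-1)\bigr)$ for each $i$, which is precisely the inequality $\pp{(\sum_j L_{j,P})}{k_i}>0$ proved in Lemma \ref{area_change} from the bounds on $\alpha_{i,P}$; the same bounds give $D\succ 0$. What your route buys is a single scalar criterion in place of $n$ dominance inequalities, and it makes the rank-one structure of the coupling through $k_P$ explicit; what the paper's route buys is that it needs only the sign pattern and row sums of the Hessian, with no inversion of $D$ and no proportionality identity to verify.
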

\begin{proof}
The Hessian matrix of $\mathcal{E}_{P}(s)$ is
\[\mathrm{M}_{P}=\left[\frac{\partial L_{i,P}}{\partial s_j}\right]_{n\times n}.\]
From the proof of Lemma \ref{packing on polygon}, we can deduce that $\pp{k_P}{k_i}>0$ for$1\leq i\leq n$. Namely, fixed $k_j$ for any $j\neq i$, the radius $r_P$  of circle $C_P$ decreases as $k_i$ increases, as shown in Figure \ref{3b}.
Therefore the length $l_{j,P}$ will decrease as $r_P$ decreases. Since
$L_{j,P}=k_jl_{j,P}$, we have $\frac{\partial L_{j,P}}{\partial k_i}<0$.  Moreover, we know $\frac{\partial L_{j,P}}{\partial s_i}<0$ by $s_i=\ln k_i$.

By Lemma \ref{gauss}, we obtain
\[\frac{\partial L_{i,P}}{\partial s_i}=-\frac{\partial(\area(\Omega_{P})+\sum_{j\neq i}L_{j,P})}{\partial s_i}>0.\]
Meanwhile, by Lemma \ref{area_change}, 
we have
\[\left\vert\frac{\partial L_{i,P}}{\partial s_i}\right\vert-\sum_{j\neq i}\left\vert\pp{L_{j,P}}{s_i}\right\vert=
\frac{\partial(\sum_{k=1}^nL_{k,P})}{\partial s_i}>0.\]
Therefore, the Hessian matrix of $\mathcal{E}_{P}(x)$ is a symmetric strictly diagonally dominant matrix with positive diagonal entries, which yields that $\mathrm{M}_{P}$ is positive definite. Thus, the potential function $\mathcal{E}_P(s)$ is strictly convex.
\end{proof}

\section{The proof of main theorem}
\subsection{Limit behaviour}
Now we consider the limit behaviour of a generalized circle packing on an abstract polygon $P$. Set $\{k^m=(k_1^m,\cdots,k_n^m)\}_{m=1}^{+\infty}\subset\mathbb{R}_+^{n}$ a sequence of geodesic curvatures.
Then we obtain a sequence of polygons $\{\tilde{P}^m\}_{m=1}^{+\infty}$ with generalized circle packings that have circles with geodesic curvatures $\{k^m\}_{m=1}^{+\infty}.$ 
Denote $C_i^m$ and  $C_{P^m}$ as the corresponding circle whose geodesic curvatures are $k_i^m$ and $k_{P^m}$, respectively.
 Let $L_{i,P^m}$ and $l_{i,P^m}$ be the total curvature and length of the arc $C_i^m\cap \tilde{P}^m$. 
 
 Set $c=(c_1,\cdots,c_n)\in [0,+\infty]^n$  a non-negative (possible infinity) vector. Consider the limit behavior of $\{\tilde{P}^m\}_{m=1}^{\infty}$ as 
 \[
\lim_{m\rightarrow+\infty}k^m\rightarrow c=(c_1,\cdots,c_n),
\]
then we have the following lemma.
\begin{lemma}\label{limita0}
    If $c_j=0$ for each $1\leq j\leq n$, then 
$\lim_{m\rightarrow+\infty}L^m_{j,P}=0$ for each $j=1,\cdots,n$.
    
\end{lemma}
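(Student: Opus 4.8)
The plan is to reduce the statement to a single one-variable asymptotic estimate by using the explicit formula for $L_{j,P}$ in the regime $0<k_j<1$. Since $c_j=0$ for every $j$, the convergence $k^m\to c$ guarantees that for all sufficiently large $m$ we have $0<k_j^m<1$ simultaneously for all $j$. Hence I would work exclusively in the third branch of \eqref{expression} together with the second branch of \eqref{angle_compute} (Lemma \ref{angle_expression}), which gives the closed expression
\[
L_{j,P^m}=2\arccoth\!\left(\frac{k_{P^m}}{\sqrt{1-(k_j^m)^2}}\right)\cdot\frac{k_j^m}{\sqrt{1-(k_j^m)^2}}.
\]
Here $k_{P^m}>1$ always holds, since $C_{P^m}$ is a genuine hyperbolic circle, so $k_{P^m}=\coth r_{P^m}>1$.

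The subtlety, which I expect to be the only real obstacle, is that this is an indeterminate product of the form $0\cdot\infty$: the factor $k_j^m/\sqrt{1-(k_j^m)^2}$ tends to $0$, while the $\arccoth$ factor blows up, because its argument $k_{P^m}/\sqrt{1-(k_j^m)^2}$ tends to $1^+$ (indeed $k_{P^m}\to 1^+$ as all $k_i^m\to 0$, forced by the angle constraint $\sum_i\theta_{i,P^m}=2\pi$). Rather than pinning down the precise rate at which $k_{P^m}\to 1$, I would avoid analyzing $k_{P^m}$ altogether by exploiting monotonicity: $\arccoth$ is strictly decreasing on $(1,\infty)$, and $k_{P^m}>1$, so
\[
\arccoth\!\left(\frac{k_{P^m}}{\sqrt{1-(k_j^m)^2}}\right)<\arccoth\!\left(\frac{1}{\sqrt{1-(k_j^m)^2}}\right)=\ln\frac{1+\sqrt{1-(k_j^m)^2}}{k_j^m},
\]
where the last identity follows from $\arccoth x=\tfrac12\ln\frac{x+1}{x-1}$ after rationalizing. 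This replaces the genuinely two-variable quantity by a bound depending on $k_j^m$ alone.

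Combining this bound with positivity of the total geodesic curvature, I would then squeeze:
\[
0<L_{j,P^m}<2\ln\!\left(\frac{1+\sqrt{1-(k_j^m)^2}}{k_j^m}\right)\frac{k_j^m}{\sqrt{1-(k_j^m)^2}}.
\]
As $k_j^m\to 0$ the right-hand side tends to $0$, since $\sqrt{1-(k_j^m)^2}\to 1$, the logarithm grows only like $-\ln k_j^m$, and $k\ln(1/k)\to 0$ as $k\to 0^+$. The squeeze theorem then yields $L_{j,P^m}\to 0$ for each $j$, which completes the proof. The entire argument is thus one asymptotic estimate; the single point requiring care is recognizing the $0\cdot\infty$ cancellation and dispatching it with the $k_{P^m}$-free upper bound rather than with a rate analysis of $k_{P^m}$.
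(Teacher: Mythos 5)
Your proof is correct, and it takes a genuinely different route from the paper's. The paper argues in two steps: it first treats the fully symmetric configuration $k_j^m=\frac{1}{m}$ for all $j$, where symmetry forces $\theta_{j,P^m}=\frac{2\pi}{n}$, so that $k_{P^m}$ is explicitly computable from \eqref{tt} and $L_{j,P^m}=\frac{1}{\sqrt{m^2-1}}\alpha_{j,P^m}\to 0$ follows by expanding the $\arccoth$ in \eqref{angle_compute}; it then handles an arbitrary sequence $k^l\to(0,\cdots,0)$ by the monotonicity $\partial\bigl(\sum_{j}L_{j,P}\bigr)/\partial k_i>0$ of Lemma \ref{area_change}, dominating $\sum_j L_{j,P^l}$ by its value at the symmetric configuration. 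You bypass both steps: the single observation $k_{P^m}>1$ (valid by construction, since $C_P$ is a genuine hyperbolic circle with $k_P\in(1,\infty)$ in Lemma \ref{packing on polygon}), combined with the fact that $\arccoth$ is decreasing on $(1,\infty)$, yields the $k_P$-free bound $L_{j,P}<2\ln\bigl(\frac{1+\sqrt{1-k_j^2}}{k_j}\bigr)\cdot\frac{k_j}{\sqrt{1-k_j^2}}$, and $k\ln(1/k)\to 0$ as $k\to 0^+$ concludes; your computation of $\arccoth\bigl(1/\sqrt{1-k^2}\bigr)=\ln\frac{1+\sqrt{1-k^2}}{k}$ from $\arccoth x=\frac12\ln\frac{x+1}{x-1}$ is correct. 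As for what each approach buys: your estimate is local in $k_j$, holding regardless of what the other curvatures do, so in the same stroke it proves the stronger Lemma \ref{limita1} (where only one $c_j$ vanishes), which the paper instead deduces from Lemma \ref{limita0} via a further comparison argument using $\partial L_{j,P}/\partial k_i<0$; the paper's route, by contrast, reuses the variational monotonicity machinery it has already established and confines all explicit asymptotics to a single exactly solvable configuration, at the cost of being less direct here.
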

\begin{proof}
We divide our proof into two steps.
     
     First, we consider a special case of this lemma. Assume that 
    \[k_j^m=\frac{1}{m},~~\text{for each }  1\leq j\leq n,
    \] Since $k_j^m<1$ and $\theta_{j,P^m}=\frac{2\pi}{n}$ for each $j$, by \eqref{tt} we have
    \begin{align}
    \label{equal}
    k^j_{P^m}=\sqrt{1+\frac{\tan^2{\frac{\pi}{n}}}{m^2}},~~\forall j=1,\cdots,n.
    \end{align}
    Then by Lemma \ref{angle_expression} and \eqref{equal}, we have
    \begin{align*}
    \alpha_{j,P^m}&=2
    \arccoth\sqrt{\frac{1+\frac{\tan^2{\frac{\pi}{n}}}{m^2}}{1-\frac{1}{m^2}}},\nonumber
    \\&=\ln\frac{\sqrt{1+\frac{\tan^2\frac{\pi}{n}}{m^2}}+\sqrt{1-\frac{1}{m^2}}}{\sqrt{1+\frac{\tan^2\frac{\pi}{n}}{m^2}}-\sqrt{1-\frac{1}{m^2}}}.
    \end{align*}
   There exists a  constant $D>0$  such that $\alpha_{j,P^m}\le D-\ln\bigg(\sqrt{1+\frac{\tan^2{\frac{\pi}{n}}}{m^2}}-1\bigg)$ as $m$ is sufficiently large. Therefore, by \eqref{expression} 
    we have
    \[
    L_{j,P^m}=\frac{1}{\sqrt{m^2-1}}\alpha_{j,P^m}\rightarrow 0~~\text{as}~~m\rightarrow+\infty.
    \]
    Next, we consider the generalized case. Since total geodesic curvatures are positive, we only need to prove 
    \begin{align}\label{limsup}
        \limsup_{l\to+\infty}{\sum_{j=1}^nL_{j,P^l}}=0.
    \end{align}
    Since $k^l\rightarrow c=(0,\cdots,0),$ for each $m>0$, we can choose a number $N$ large enough such that $k_{j,P^l}\le\frac{1}{m}$ for any $1\leq j\leq n$ and $l\ge N$.
    By Lemma \ref{convex}, we know
    \[
    \pp{(\sum^{n}_{j=1}L_{j,P})}{k_i}>0.
    \]
    Therefore, 
$$\sum_{j=1}^{n}L_{i,P^l}\le n\frac{1}{\sqrt{m^2-1}}\bigg(-\ln\bigg(\sqrt{1+\frac{\tan^2{\frac{\pi}{n}}}{m^2}}-1\bigg)+D\bigg).$$ Thus \eqref{limsup} holds.
\end{proof}

\begin{lemma}\label{limita1}
    If $c_j=0$ for some $j$ $(1\leq j\leq n)$, then 
$\lim_{m\rightarrow+\infty}L^m_{j,P}=0$.
\end{lemma}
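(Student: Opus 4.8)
The plan is to reduce the claim to a single explicit one–variable estimate, isolating the only genuine source of trouble: the possible blow-up of the generalized angle $\alpha_{j,P^m}$. Since $k_j^m\to 0$, I may assume $0<k_j^m<1$ for all large $m$, so by \eqref{expression} together with Lemma \ref{angle_expression},
$$L_{j,P^m}=\alpha_{j,P^m}\,\frac{k_j^m}{\sqrt{1-(k_j^m)^2}},\qquad \alpha_{j,P^m}=2\arccoth\frac{k_{P^m}}{\sqrt{1-(k_j^m)^2}}.$$
The prefactor $k_j^m/\sqrt{1-(k_j^m)^2}$ tends to $0$, so everything hinges on how fast $\alpha_{j,P^m}$ can grow. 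In Lemma \ref{limita0} this was controlled by the explicit value of $k_{P^m}$ arising from the symmetric configuration; here $k_{P^m}$ is not pinned down (it may even tend to $+\infty$ if some other $k_i^m\to\infty$), so what I need is a bound on $\alpha_{j,P^m}$ that is \emph{uniform} in $k_{P^m}$.

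First I would rewrite $\arccoth x=\tfrac12\ln\frac{x+1}{x-1}$, which gives $\alpha_{j,P^m}=\ln\!\big(1+\tfrac{2v_m}{u_m-v_m}\big)$ with $u_m:=k_{P^m}$ and $v_m:=\sqrt{1-(k_j^m)^2}$. The crucial observation is that $C_{P^m}$ is a genuine circle, so $u_m=k_{P^m}>1>v_m$; hence
$$u_m-v_m>1-v_m=1-\sqrt{1-(k_j^m)^2}=\frac{(k_j^m)^2}{1+\sqrt{1-(k_j^m)^2}}\ge\frac{(k_j^m)^2}{2}.$$
Since $v_m<1$, this yields the $k_{P^m}$-free bound $\alpha_{j,P^m}\le\ln\!\big(1+4/(k_j^m)^2\big)$, and therefore
$$0<L_{j,P^m}\le\frac{k_j^m}{\sqrt{1-(k_j^m)^2}}\,\ln\!\Big(1+\frac{4}{(k_j^m)^2}\Big).$$

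It then suffices to note that the right-hand side depends only on $x:=k_j^m$ and that $\frac{x}{\sqrt{1-x^2}}\ln(1+4/x^2)\sim 2x\ln(1/x)\to 0$ as $x\to 0^+$; since $k_j^m\to 0$, a squeeze gives $\lim_{m\to\infty}L_{j,P^m}=0$. The main (and essentially only) obstacle is the absence of any a priori control on the radius of the orthogonal circle $C_{P^m}$, which rules out a direct imitation of the symmetric computation in Lemma \ref{limita0}; the remedy is precisely the inequality $k_{P^m}>1$, which bounds the denominator $u_m-v_m$ below by $\tfrac12(k_j^m)^2$ regardless of how large $k_{P^m}$ is, so that the logarithmic growth of $\alpha_{j,P^m}$ is always dominated by the linear decay of $k_j^m$.
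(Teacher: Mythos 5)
Your proof is correct, but it takes a genuinely different route from the paper. The paper's proof is a two-line comparison argument: it picks an auxiliary sequence $\widetilde{k}^m\to(0,\dots,0)$ with $\widetilde{k}_j^m=k_j^m$ and $\widetilde{k}_i^m<k_i^m$ for $i\neq j$, invokes the monotonicity $\partial L_{j,P}/\partial k_i<0$ ($i\neq j$, established in the proof of Lemma \ref{convex}) to get $L_{j,P^m}<\widetilde{L}_{j,P^m}$, and then quotes Lemma \ref{limita0} to conclude. You instead prove the statement from scratch by an explicit estimate: writing $\alpha_{j,P^m}=2\arccoth\bigl(k_{P^m}/\sqrt{1-(k_j^m)^2}\bigr)$ in logarithmic form and using only the structural fact $k_{P^m}>1$ (which indeed holds, since Lemma \ref{packing on polygon} produces $k_P\in(1,\infty)$), you obtain the bound
\begin{equation*}
0<L_{j,P^m}\le\frac{k_j^m}{\sqrt{1-(k_j^m)^2}}\,\ln\Bigl(1+\frac{4}{(k_j^m)^2}\Bigr),
\end{equation*}
which is uniform in $k_{P^m}$ and in the other curvatures, and tends to $0$ like $2k_j^m\ln(1/k_j^m)$. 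Your algebra checks out: $u_m-v_m>1-v_m\ge(k_j^m)^2/2$ is correct, and the formula for $\alpha_{j,P}$ matches Lemma \ref{angle_expression}. What each approach buys: the paper's argument is shorter because it recycles machinery already in place (the sign of $\partial L_{j,P}/\partial k_i$ and the general case of Lemma \ref{limita0}, both of which themselves rest on the monotonicity analysis); your argument is self-contained, avoids both of those ingredients, and yields a quantitative decay rate rather than a bare limit. In fact your uniform bound would also subsume the "generalized case" step in the paper's proof of Lemma \ref{limita0}, where the authors needed a separate comparison argument to pass from the symmetric configuration $k_j^m=1/m$ to arbitrary sequences tending to $0$.
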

\begin{proof}
 We can choose an another sequence  $\widetilde{k}^{m}\rightarrow \textbf{0}=(0,\cdots,0)$ such that $\widetilde{k}_{j}^{m}=k_{j}^{m}$ and $\widetilde{k}_{i}^{m}<k_{i}^{m}$.
 By $\tilde{l}_{i,P^m}$ and $\tilde{L}_{i,P^m}$we denote the corresponding quantities of generalized circle packings on $P$ with geodesic curvatures $\{\tilde{k}^m\}$.
 Since 
$\pp{L_{j,P}}{k_i}<0 \text{ for } i\neq j$, we know $\widetilde{L}_{j,P}>L_{j,P}$. Owing to Lemma $3.1$, we have $\lim_{m\rightarrow+\infty}\widetilde{L}^{m}_{j,P}=0$. 
   
\end{proof}
 
\begin{lemma}\label{limita2}
 Let $I\subset \{1,\cdots,n\}$ be the a nonempty vertex set that the corresponding circle degenerate to a point, i.e.
\[
I=\{i:c_i=\infty\}.
\]
Then
    \begin{align}
     \lim_{m\rightarrow+\infty}\sum_{i\in I}L_{i,P^m}&=|I|\pi,~~\text{if}~~|I|\le n-3,\label{limit2} 
    \\\lim_{m\rightarrow+\infty}\sum_{i\in I}L_{i,P^m}&=(n-2)\pi,~~\text{if}~~n-3<|I|\le n.\label{limit3}
     \end{align} 

\end{lemma}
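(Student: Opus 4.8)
The plan is to run everything through two identities that are already available: the Gauss--Bonnet formula of Lemma \ref{gauss}, written for each member of the sequence as
\[
\sum_{i=1}^{n}L_{i,P^m}=(n-2)\pi-\area(\Omega_{P^m}),
\]
and the cone-angle constraint $\sum_{i=1}^{n}\theta_{i,P^m}=2\pi$ coming from the construction in the proof of Lemma \ref{packing on polygon}, where $\theta_{i,P^m}=2\arccot\!\big(k_i^m/\sqrt{k_{P^m}^2-1}\big)$ is the inner angle of $C_{P^m}$ in the quadrilateral $Q_{k_i^m,k_{P^m}}$. The first identity already gives the upper bound $\sum_{i\in I}L_{i,P^m}\le(n-2)\pi$, so the whole statement reduces to understanding the auxiliary curvature $k_{P^m}$ in the limit; once its behaviour is known, \eqref{limit2} and \eqref{limit3} follow by feeding it into the explicit expressions \eqref{expression} and \eqref{angle_compute} for $L_{i,P^m}$ and $\alpha_{i,P^m}$.

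First I would pin down $k_{P^m}$ by a contradiction argument based on Lemma \ref{vari_angle}, which says that each $\theta_{i,P^m}$ is increasing in $k_{P^m}$, tends to $\pi$ as $k_{P^m}\to+\infty$ (for $c_i<\infty$) and tends to $0$ as $k_{P^m}\to 1^+$. If a subsequence had $k_{P^m}\to 1^+$ then every $\theta_{i,P^m}\to 0$ and the angle sum could not equal $2\pi$, so $\liminf k_{P^m}>1$. At the upper end the dichotomy depends on $|I|$. When $|I|\le n-3$ there are at least three indices $j\notin I$; along a subsequence with $k_{P^m}\to+\infty$ each of these would satisfy $\theta_{j,P^m}\to\pi$, forcing $\sum_i\theta_{i,P^m}\ge 3\pi>2\pi$, a contradiction. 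Hence in the regime of \eqref{limit2} the sequence $k_{P^m}$ stays in a compact subset $[a,b]\subset(1,\infty)$. Conversely, when $|I|\ge n-1$ a bounded subsequence $k_{P^m}\to k_P^*\in(1,\infty)$ would force $\theta_{i,P^m}\to 0$ for all $i\in I$ (of which there are at least $n-1$), leaving the limit of $\sum_i\theta_{i,P^m}$ at most $\pi<2\pi$ and again contradicting the angle sum; so $k_{P^m}\to+\infty$ in that regime.

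With $k_{P^m}$ located, the two limits are routine. For \eqref{limit2} (and for the subcase of \eqref{limit3} with $|I|=n-2$ in which $k_{P^m}$ happens to stay bounded), $k_{P^m}\in[a,b]$ and $k_i^m\to\infty$ give $\alpha_{i,P^m}=2\arccot\!\big(k_{P^m}/\sqrt{(k_i^m)^2-1}\big)\to\pi$ and $k_i^m/\sqrt{(k_i^m)^2-1}\to 1$, so $L_{i,P^m}\to\pi$ for each $i\in I$ and $\sum_{i\in I}L_{i,P^m}\to|I|\pi$; since $|I|=n-2$ exactly in the overlap case, this equals $(n-2)\pi$ there, as required. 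For \eqref{limit3} in the regime $k_{P^m}\to+\infty$ I would use that $r_{P^m}=\arccoth k_{P^m}\to 0$; since $C_{P^m}$ meets every $C_i^m$ orthogonally, the bounding arcs of $\Omega_{P^m}$ lie inside the hyperbolic disk $D_{P^m}$ of radius $r_{P^m}$, whence $\area(\Omega_{P^m})\le 2\pi(\cosh r_{P^m}-1)\to 0$. At the same time, for every $j\notin I$ the angle $\alpha_{j,P^m}=2\arccot\!\big(k_{P^m}/\sqrt{(k_j^m)^2-1}\big)\to 0$ (and likewise in the $k_j<1$ and $k_j=1$ cases via \eqref{expression}), so $\sum_{j\notin I}L_{j,P^m}\to 0$. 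Substituting both into the Gauss--Bonnet identity gives $\sum_{i\in I}L_{i,P^m}\to(n-2)\pi$, completing \eqref{limit3}.

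The main obstacle is the control of $k_{P^m}$, i.e.\ deciding exactly when the orthogonal circle degenerates to a point, together with the borderline case $|I|=n-2$, where $k_{P^m}$ may either stay bounded or blow up; I expect to handle this by the subsequence argument above, checking that both alternatives yield the same value $(n-2)\pi$ (either directly from $L_{i,P^m}\to\pi$ for the $n-2$ indices, or from the vanishing of the area and of the outer contributions). The one computation to double check is the geometric containment $\Omega_{P^m}\subseteq D_{P^m}$ that makes the area estimate work; if that is awkward to justify directly, the same conclusion follows from the per-piece Gauss--Bonnet relation $\area(S_i)=\pi-\theta_{i,P^m}-L_{i,P^m}$ for the quadrilaterals $S_i$ tiling $\Omega_{P^m}$, summed over $i$.
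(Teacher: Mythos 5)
Your proposal is correct and takes essentially the same route as the paper: both control $k_{P^m}$ through the angle sum $\sum_{i}\theta_{i,P^m}=2\pi$ and Lemma \ref{vari_angle}, obtain $L_{i,P^m}\to\pi$ for $i\in I$ when the dual curvature stays bounded, and in the unbounded regime combine $\area(\Omega_{P^m})\to 0$ with $L_{j,P^m}\to 0$ for $j\notin I$ and Lemma \ref{gauss} (the paper packages the case $|I|\ge n-2$ as a contradiction argument where you use a subsequence dichotomy, and your containment $\Omega_{P^m}\subseteq D_{P^m}$ with $\area(D_{P^m})=2\pi(\cosh r_{P^m}-1)\to 0$ actually justifies the area collapse that the paper asserts without proof). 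One small caution: your fallback of ``summing the per-piece relations $\area(S_i)=\pi-\theta_{i,P^m}-L_{i,P^m}$ over $i$'' is circular as stated, since the sum merely reproduces Lemma \ref{gauss}; it becomes a proof only after checking per piece that $\theta_{i,P^m}+L_{i,P^m}\to\pi$ for $i\in I$ (e.g.\ via $\arctan x+\arctan(1/x)=\pi/2$ applied to the explicit formulas), so keep the disk-containment argument as the primary one.
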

\begin{proof}
First, we prove the case when $|I|\le n-3.$

 We first show that geodesic curvatures $\{k_{P^m}\}_{m=1}^{\infty}$ of the dual disks $\{C_{P^m}\}_{m=1}^{\infty}$ must have an upper bound $K<+\infty.$ Otherwise, owing to Lemma \ref{vari_angle},  we have $\theta_{j,P^m}\rightarrow\pi$ for each $j\notin I$. Therefore, $$\lim_{m\to+\infty}\sum_{j=1}^n\theta_{j,P^m}=(n-|I|)\pi>2\pi,$$
which gives a contradiction.
Then by \eqref{angle_compute}, we have
\[\lim_{m\to+\infty}L_{i,P}^m=\frac{k_i^m}{\sqrt{(k_i^m)^2-1}}\alpha_{i,P^m}=\pi~~\forall i\in I.\]
Thus we have the equality (\ref{limit2}).

Now we turn to prove \eqref{limit3}. By Lemma \ref{gauss}, we have 
\[
\sum_{i\in I}L_{i,P^m}<(n-2)\pi.
\]
Therefore, we only need to prove
\begin{align}\label{contra}
    \liminf_{m\to+\infty}\sum_{i\in I}L_{i,P^m}=(n-2)\pi.
\end{align}

If not, there would exist a sub-sequence $\{k^{m_t}\}_{t=1}^{\infty}$ of $\{k^m\}_{m=1}^{\infty}$ such that
\begin{align}\label{assumption}
\lim_{t\to+\infty}\sum_{i\in I}L_{i,P^{m_t}}=A<(n-2)\pi,
\end{align}
for some non-negative number $A$. 
Then we claim that geodesic curvatures $\{k_{P^{m_t}}\}_{t=1}^{\infty}$ of the dual circles $C_{P^{m_t}}$ must tend to $+\infty$.

Otherwise, there is a sub-sequence $\{k^{m_{t_s}}\}$ of $\{k^{m_{t}}\}$ such that corresponding geodesic curvatures of dual circles is bounded above from $+\infty.$

Then by the same argument of the first case for $|I|\le n-3$, we have
\[
L_{i,P^{m_{t_s}}}\rightarrow\pi,~~\forall i\in I.
\]
Therefore
\[\lim_{t\to+\infty}\sum_{i\in I}L_{i,P^{m_{t_s}}}=|I|\pi\ge(n-2)\pi,\]
which contradicts to assumption \eqref{assumption}. Hence the claim is proved.

Let $\Omega_{P^{m_t}}$ be the domain defined as in Lemma \ref{area_change}. Since $\{k_{P^{m_t}}\}_{t=1}^{\infty}$ tends to infinity, the area of $\Omega_{P^{m_t}}$ must collapse to $0$. 
For $j\notin I$, it is easy to see that 
\[
\lim_{t\to+\infty}L_{j,P^{m_t}}=0.
\]
Therefore, by Lemma \ref{gauss}, we have
\[\lim_{t\to+\infty}\sum^n_{i=1 }L_{i,P^{m_t}}=\lim_{t\to+\infty}\sum_{i\in I}L_{i,P^{m_t}}=(n-2)\pi,\]
which contradicts to \eqref{assumption}.
\end{proof}

    By Lemma \ref{limita2}, we can  deduce the following corollary.
\begin{corollary}\label{cor1}
Let $\{C_1,\cdots, C_n\}$ be a generalized circle packing on an abstract polygon $P$ with $n$ vertices. Then
$L_{i,P}<\pi$ for each $i$.
\begin{proof}
    The reason is that, fixing $k_j$ for $j\neq i$, we have 
    \[
    \pp{L_{i,P}}{k_i}>0,
    ~~\lim_{k_i\rightarrow+\infty}L_{i,P}=\pi.\] 
\end{proof}
\end{corollary}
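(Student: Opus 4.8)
The plan is to combine two facts that are already available from the preceding lemmas: strict monotonicity of $L_{i,P}$ in the single variable $k_i$ (with all other curvatures held fixed), and the evaluation of the limiting value of $L_{i,P}$ as $k_i\to+\infty$. The corollary then drops out, so I do not expect a serious obstacle; the work is entirely in assembling the right ingredients.

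First I would record the monotonicity. Fixing $k_j$ for all $j\neq i$, the proof of Lemma \ref{convex} already establishes $\frac{\partial L_{i,P}}{\partial s_i}>0$; since $s_i=\ln k_i$ is strictly increasing in $k_i$, the chain rule gives $\frac{\partial L_{i,P}}{\partial k_i}>0$. Thus, viewed as a function of $k_i\in(0,+\infty)$ with the remaining curvatures frozen, $L_{i,P}$ is strictly increasing, and therefore bounded above by its supremum, which is its limit as $k_i\to+\infty$.

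Next I would identify that limit using Lemma \ref{limita2}. Freezing $k_j$ at finite positive values for every $j\neq i$ and letting $k_i\to+\infty$ is precisely the degeneration considered in that lemma, with the index set $I=\{i\}$ (so $c_i=\infty$ while every other $c_j$ is finite). When $n\ge 4$ we have $|I|=1\le n-3$, so \eqref{limit2} yields $\lim_{k_i\to+\infty}L_{i,P}=|I|\pi=\pi$; when $n=3$ we have $|I|=1>0=n-3$, so \eqref{limit3} yields $(n-2)\pi=\pi$. In either regime the limiting value is exactly $\pi$.

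Finally I would combine the two observations: since $L_{i,P}$ is strictly increasing in $k_i$ and tends to $\pi$ as $k_i\to+\infty$, every finite admissible value satisfies $L_{i,P}<\pi$, which is the claim. The only points that require genuine care are verifying that Lemma \ref{limita2} indeed covers the singleton set $I=\{i\}$ across both the $n=3$ and $n\ge 4$ cases (handled above by the two separate formulas), and insisting that the monotonicity is \emph{strict} rather than merely non-strict, so that the bound is approached from strictly below and the resulting inequality is strict.
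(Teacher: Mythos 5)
Your proof is correct and takes essentially the same approach as the paper's, which likewise combines the monotonicity $\pp{L_{i,P}}{k_i}>0$ (available from the proof of Lemma \ref{convex} via $s_i=\ln k_i$) with the limit $\lim_{k_i\to+\infty}L_{i,P}=\pi$ obtained from Lemma \ref{limita2} applied to $I=\{i\}$. The only difference is that you make explicit the $n=3$ versus $n\ge 4$ case distinction in invoking \eqref{limit2} and \eqref{limit3}, which the paper's one-line proof leaves implicit.
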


\subsection{The proof of Theorem \ref{main1}}
The aim of this section is to obtain Theorem \ref{main1} by the  variational principle. Since $L_i$ is the total geodesic curvature of the circle on vertex $i$, we have
\begin{align*}
    L_i=\sum_{P\in F_{\{i\}}}L_{i,P},
\end{align*}
where $F_{\{i\}}$ is the subset of $F_{g,n}$ consisting of $P$ with the vertex $i$.
We can define the potential function
\begin{equation}\label{W}
\mathcal{E}(s)=\sum_{P\in F_{g,n}} \mathcal{E}_{P}(s)\end{equation}
on $\mathbb{R}^{\vert V_{g,n}\vert}$, where $s_i=\ln{k}_i$ is defined in section \ref{section2}, and $\mathcal{E}_P$ only depends on $\{s_i\}_{i\in V(P)}$. We know that $\mathcal{E}_{P}(s)$ is strictly convex by Lemma \ref{convex}, therefore,  $\mathcal{E}(s)$ is strictly convex. Note that
\[\frac{\partial \mathcal{E}(s)}{\partial s_i}=L_{i}, \quad i\in V_{g,n}.\]

It is easy to see that $\nabla\mathcal{E} =(L_1,\cdots,L_{\vert V_{g,n}\vert})^T$, therefore the Hessian of $\mathcal{E}$ equals to a Jacobi matrix, i.e.
\[\widetilde{M} = \frac{\partial(L_1,...,L_{\vert V_{g,n}\vert})}{\partial(s_1,...,s_{\vert V_{g,n}\vert})}=\begin{pmatrix}
 	 		\frac{\partial L_1}{\partial s_1}&\cdots&\frac{\partial L_{1}}{\partial s_{\vert V_{g,n}\vert}}\\
 	 		\vdots&\ddots&\vdots\\
 	 		\frac{\partial L_{\vert V_{g,n}\vert}}{\partial s_{1}}&\cdots&\frac{\partial L_{\vert V_{g,n}\vert}}{\partial s_{\vert V_{g,n}\vert}}\\
\end{pmatrix}.\]
\begin{lemma}\label{pro 3.1}
    The Jacobi matrix $\widetilde{M}$ is positive definite.
\end{lemma}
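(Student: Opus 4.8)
The plan is to assemble $\widetilde{M}$ from the local Hessians $\mathrm{M}_P$ that were already shown to be positive definite in Lemma \ref{convex}, and to argue that the global matrix inherits strict diagonal dominance. First I would record the structure coming from \eqref{W}: since $\mathcal{E}(s)=\sum_{P\in F_{g,n}}\mathcal{E}_P(s)$ and each $\mathcal{E}_P$ depends only on the coordinates $\{s_i\}_{i\in V(P)}$, the Hessian decomposes as
\[
\widetilde{M}=\sum_{P\in F_{g,n}}\mathrm{M}_P,
\]
where each $\mathrm{M}_P$ is understood as a $|V_{g,n}|\times|V_{g,n}|$ matrix that is zero outside the block indexed by $V(P)$. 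Concretely, the entry $\frac{\partial L_i}{\partial s_j}=\sum_{P\in F_{\{i\}}\cap F_{\{j\}}}\frac{\partial L_{i,P}}{\partial s_j}$, so the global entry is a sum over the faces containing both $i$ and $j$ of the corresponding local entries.

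Next I would transfer the sign information established in the proof of Lemma \ref{convex} to the global level. For each face $P$, that proof shows the diagonal entry $\frac{\partial L_{i,P}}{\partial s_i}>0$, the off-diagonal entries $\frac{\partial L_{j,P}}{\partial s_i}<0$ for $j\neq i$ (both in $V(P)$), the symmetry $\frac{\partial L_{i,P}}{\partial s_j}=\frac{\partial L_{j,P}}{\partial s_i}$ coming from closedness of $\omega_{\tilde P}$, and the strict row-dominance
\[
\frac{\partial L_{i,P}}{\partial s_i}-\sum_{j\neq i}\left\vert\frac{\partial L_{j,P}}{\partial s_i}\right\vert=\frac{\partial\bigl(\sum_{k}L_{k,P}\bigr)}{\partial s_i}>0,
\]
which is exactly the positivity from Lemma \ref{area_change}. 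Summing these inequalities over all $P\in F_{\{i\}}$ yields, for the global matrix, that the diagonal entry $\widetilde{M}_{ii}>0$, that $\widetilde{M}$ is symmetric, and that
\[
\widetilde{M}_{ii}-\sum_{j\neq i}\bigl\vert\widetilde{M}_{ij}\bigr\vert
=\sum_{P\in F_{\{i\}}}\left(\frac{\partial L_{i,P}}{\partial s_i}-\sum_{j\neq i}\left\vert\frac{\partial L_{j,P}}{\partial s_i}\right\vert\right)>0,
\]
so $\widetilde{M}$ is a symmetric, strictly diagonally dominant matrix with positive diagonal. By the standard Gershgorin/diagonal-dominance criterion, such a matrix is positive definite, which finishes the proof.

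The one point requiring care — and the main obstacle — is the bookkeeping of off-diagonal entries when $i$ and $j$ lie in more than one common face: I must check that $\bigl\vert\sum_P \frac{\partial L_{j,P}}{\partial s_i}\bigr\vert\le\sum_P\bigl\vert\frac{\partial L_{j,P}}{\partial s_i}\bigr\vert$ is used in the correct direction, which is immediate here because all the local off-diagonal entries $\frac{\partial L_{j,P}}{\partial s_i}$ have the \emph{same} sign (they are all negative), so there is no cancellation and the triangle inequality is in fact an equality. This uniform sign is precisely what lets the per-face dominance inequalities add up cleanly to global dominance, so no face-adjacency combinatorics beyond "which faces contain a given pair of vertices" enters the argument.
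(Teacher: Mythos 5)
Your proposal is correct and follows essentially the same route as the paper's proof: both decompose $\widetilde{M}$ over faces, import the sign facts (positive diagonal, negative off-diagonal entries) and the per-face strict dominance $\partial\bigl(\sum_{j}L_{j,P}\bigr)/\partial s_i>0$ from Lemmas \ref{convex} and \ref{area_change}, and conclude via symmetric strict diagonal dominance. Your explicit remark that the uniform negative sign of the off-diagonal entries prevents cancellation when summing over faces is exactly the (implicit) step in the paper's identity $\bigl\vert\partial L_i/\partial s_i\bigr\vert-\sum_{j\neq i}\bigl\vert\partial L_j/\partial s_i\bigr\vert=\partial L_i/\partial s_i+\sum_{j\neq i}\partial L_j/\partial s_i$, so your write-up just makes the paper's bookkeeping more transparent.
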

\begin{proof}
    By the proof of lemma \ref{convex}, we have
$$
\frac{\partial L_i}{\partial s_i}=\frac{\partial(\sum_{P\in F_{\{i\}}}L_{i,P})}{\partial s_i}>0,~~\forall i\in V_{g,n}.
$$
and
$$
\frac{\partial L_j}{\partial s_{i}}=\frac{\partial(\sum_{P\in F_{\{j\}}}L_{j,P})}{\partial s_{i}}\le0,~~\forall i\ne j\in V_{g,n}.
$$
By Lemma \ref{area_change} and Lemma \ref{convex}, we obtain
$$
\begin{aligned}
\big|\frac{\partial L_i}{\partial s_i}\big|-\sum_{j\ne i}\big|\frac{\partial L_j}{\partial s_i}\big|&=\frac{\partial L_i}{\partial s_i}+\sum_{j\ne i}\frac{\partial L_j}{\partial s_i}=\sum_{P\in F_{g,n}}\frac{\partial(\sum_{j\in V(P)} L_{j,P})}{\partial s_i}\\
&=-\sum_{P\in F_{g,n}}\frac{\operatorname{Area}(\Omega_P)}{\partial s_i}>0.
\end{aligned}
$$
 The Jacobi matrix $\widetilde{M}$ is a strictly diagonally dominant matrix with positive diagonal entries, therefore, $\widetilde{M}$ is positive definite.
\end{proof}
The following property of convex functions plays a key role  in the proof of Theorem \ref{main1}.

\begin{lemma}\cite{Dai}\label{injective}
Let $\mathcal{T}:\mathbb{R}^n\to\mathbb{R}$ be a $C^2$-smooth strictly convex function with apositive definite Hessian matrix. Then its gradient $\nabla \mathcal{T}:\mathbb{R}^n\to\mathbb{R}^n$ is a smooth embedding.
\end{lemma}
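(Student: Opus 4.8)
The plan is to verify the three defining properties of a smooth embedding for the gradient map $f := \nabla\mathcal{T}$: that it is an immersion, that it is globally injective, and that it is a homeomorphism onto its image. Since $\mathcal{T}$ is $C^2$, the map $f$ is $C^1$ and its Jacobian matrix at each point is precisely the Hessian $\Hessian\,\mathcal{T}$, which is positive definite everywhere by hypothesis. In particular $Df = \Hessian\,\mathcal{T}$ is invertible at every point, so $f$ is an immersion, and the inverse function theorem shows that $f$ is a local diffeomorphism. Consequently $f$ is an open map and its image $f(\mathbb{R}^n)$ is open in $\mathbb{R}^n$.

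The heart of the argument is global injectivity, which I would derive from strict monotonicity of the gradient. Given $x \neq y$, set $\gamma(t) = y + t(x-y)$ for $t \in [0,1]$ and define $\phi(t) = \langle f(\gamma(t)), x-y\rangle$. Then $\phi$ is $C^1$ with
\[
\phi'(t) = \big\langle \Hessian\,\mathcal{T}(\gamma(t))(x-y),\, x-y\big\rangle > 0,
\]
since $x - y \neq 0$ and the Hessian is positive definite. Thus $\phi$ is strictly increasing, so $\phi(1) > \phi(0)$, which rearranges to
\[
\big\langle f(x) - f(y),\, x - y \big\rangle > 0.
\]
In particular the equation $f(x) = f(y)$ cannot hold unless $x = y$, so $f$ is injective.

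Finally I would combine these facts. A continuous, injective, open map is automatically a homeomorphism onto its image: for any open $U \subset \mathbb{R}^n$ the set $(f^{-1})^{-1}(U) = f(U)$ is open in $f(\mathbb{R}^n)$ because $f$ is open, so the inverse $f^{-1} : f(\mathbb{R}^n) \to \mathbb{R}^n$ is continuous. Since $f$ is moreover a local diffeomorphism, $f^{-1}$ is in fact smooth, so $f$ is a diffeomorphism onto its open image and, a fortiori, a smooth embedding.

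The step I expect to be most delicate is the injectivity argument, where the pointwise positive definiteness of the Hessian has to be promoted to a global statement; the one-variable reduction along the segment $\gamma$ is the clean device that accomplishes this. Once monotonicity of the gradient is established, the immersion and open-map properties follow immediately from the inverse function theorem, and the passage to a homeomorphism onto the image is a routine point-set argument that exploits openness rather than any properness hypothesis.
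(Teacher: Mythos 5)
Your proof is correct. Note that the paper itself does not prove this lemma --- it is quoted from the reference [Dai] (Dai--Gu--Luo) --- and your argument is the standard one: injectivity via strict monotonicity of the gradient along segments, the immersion/local-diffeomorphism property from the positive definite Hessian and the inverse function theorem, and the point-set observation that a continuous injective open map is a homeomorphism onto its image. The only caveat is terminological and is inherited from the lemma statement itself: since $\mathcal{T}$ is only assumed $C^2$, the map $\nabla\mathcal{T}$ is $C^1$, so ``smooth embedding'' should be read as a $C^1$ embedding (or one assumes $\mathcal{T}\in C^\infty$); this does not affect your argument.
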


Now we give the proof of Theorem \ref{main1} as follows.
\begin{proof}[Proof of Theorem \ref{main1}] We only need to prove that $\nabla \mathcal{E}(\mathbb{R}_{+}^{|V_{g,n}|})$ is equal to \[\mathcal{L}=\left\{(L_1,\cdots,L_{\vert V_{g,n}\vert})\in\mathbb{R}^{\vert V_{g,n}\vert}_{+}\left\vert\right.  \sum_{w\in W}L_w<\sum_{P\in F_W}\pi \min\{N(P,W),N(P)-2\},~~\text{for each}\ W\subset
 V_{g,n}\right\}.\]
 By Lemma \ref{gauss} and Corollary \ref{cor1}, we have  $\nabla\mathcal{E}(\mathbb{R}^{|V|_{g,n}})\subset\mathcal{L}$. Thanks to Lemma \ref{injective}, $\nabla\mathcal{E}$ is injective. 
By Brouwer's Theorem on the Invariance of Domain, we need to analyze the boundary of $\nabla \mathcal{E}(\mathbb{R}_{+}^{|V_{g,n}|})$. Taking a sequence $\{s^{m}\}_{m=1}^{\infty}$ such that
\[\lim_{m\to\infty} s^{m}=a\in[-\infty,+\infty]^{\vert V_{g,n}\vert},\]
where $a_i=-\infty$ or $+\infty$ for some $i\in V$, we need to prove that $\nabla \mathcal{E}(s^{m})$ converges to the boundary of  $\mathcal{L}$.

Set  $ W,W'\subset
 V_{g,n}$ such that $a_i=+\infty$ (resp. $a_i=-\infty$) for $i\in W$ (resp. $i\in W'$). Here $F_W$ is the set of faces having at least one vertex in $W$. Suppose that  $P\in F_W$. We divide the proof into two cases as follows.
\begin{itemize}
    \item[($a$)]  If the vertices of $W$ in some  face $P\in F_{W}$ are less than $n-2$, then by (\ref{limit2}) of Lemma \ref{limita2},
    we have $$\lim_{m\to +\infty}\sum_{i\in W\cap V(P)}L_{i,P}(s^m)=N(P,W)\pi. $$
 \item[($b$)] If the vertices of $W$ in some  face $P\in F_{W}$ are more than $n-3$, then by (\ref{limit3}) of Lemma \ref{limita2},
    we obtain $$\lim_{m\to +\infty}\sum_{i\in W\cap V(P)}L_{i,P}(s^m)=(N(P)-2)\pi. $$
\end{itemize}
By $(a)$ and $(b)$, we have $$
\lim_{m\rightarrow+\infty}\sum_{i\in W}L_i(s^m)=\sum_{P\in F_W}\pi \min\{N(P,W),N(P)-2\}.
$$

Suppose $i\in W'$. Since $s_i\rightarrow\-\infty$, by Lemma  \ref{limita1}, we have
$$\lim_{m\to +\infty}L_i(s^m)=\lim_{m\rightarrow+\infty}\sum_{P\in F_{\{i\}}}L_{i,P}(s^m)=0. $$
Hence the proof of this lemma is complete.
\end{proof}
\section{Combinatorial Ricci flow}By the change of variables $s_i=\ln{k_i}$,  the flow (\ref{ricci_flow}) is also written as
\begin{equation}\label{Flow}
\frac{ds_i}{dt}=-(L_i-\hat{L}_i)~~,\forall i\in V_{g,n},
\end{equation}
 which is the negative gradient flow of the potential function $\mathcal{E}_{\hat{L}}=\mathcal{E}-\sum_i\int\hat{L}_i\mathrm{d}s_i.$
\begin{lemma}\label{lem4.1}
For any initial value $k(0)$, the solution of the flow (\ref{Flow}) exists for all time $t\in(0,\infty]$. Moreover, it is unique.
\end{lemma}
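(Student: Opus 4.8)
The plan is to regard \eqref{Flow} as an autonomous ODE $\dot{s}=X(s)$ on $\mathbb{R}^{|V_{g,n}|}$ with vector field $X(s)=-\big(L(s)-\hat{L}\big)$, where $L(s)=(L_1(s),\dots,L_{|V_{g,n}|}(s))^{T}=\nabla\mathcal{E}(s)$ and $\hat{L}$ is the fixed target vector. The whole statement then reduces to two standard facts from the theory of ordinary differential equations: that $X$ is $C^1$, which yields local existence together with uniqueness, and that $X$ is globally bounded, which rules out finite-time blow-up and hence upgrades local existence to existence for all time. Note that, unlike the convergence statement in Theorem \ref{flowthm}, no hypothesis on $\hat{L}$ is needed here.

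First I would record the regularity and the local theory. By definition $L_i=\sum_{P\in F_{\{i\}}}L_{i,P}$, and each $L_{i,P}$ is a smooth function of the geodesic curvatures of the face $P$, as noted after Lemma \ref{closed} (this smoothness persists across the horocycle transition $k_i=1$). Since $s_i=\ln k_i$ defines a diffeomorphism of $\mathbb{R}$ onto $\mathbb{R}_{+}$, each $L_i$, and therefore $X$, is a $C^1$ vector field on $\mathbb{R}^{|V_{g,n}|}$. The Picard--Lindel\"of theorem then provides, for each initial value $s(0)=\ln k(0)$, a unique maximal solution $s(t)$ defined on an interval $[0,T)$ with $T\in(0,+\infty]$.

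Next I would prove $T=+\infty$ by an a priori bound. The crucial input is Corollary \ref{cor1}, which gives $0<L_{i,P}<\pi$ for every vertex $i$ and every incident face $P$. Summing over $P\in F_{\{i\}}$ shows $0<L_i<|F_{\{i\}}|\,\pi$, so $L$ takes values in a fixed bounded set and hence $\|X(s)\|\le M$ for a constant $M$ independent of $s$. Assume, for contradiction, that $T<+\infty$. Then for $0\le t<T$,
\[
\|s(t)-s(0)\|=\Big\|\int_0^{t}X(s(\tau))\,\D\tau\Big\|\le M\,t\le M\,T,
\]
so $s(t)$ stays in a fixed compact set; moreover the bound $\|X\|\le M$ makes $t\mapsto s(t)$ Lipschitz, so $s(t)$ is Cauchy as $t\to T^{-}$ and converges to some $s^{*}\in\mathbb{R}^{|V_{g,n}|}$. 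Applying the local theory at $s^{*}$ extends the solution past $T$, contradicting the maximality of $T$. Therefore $T=+\infty$.

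The argument is essentially routine once the two ingredients are in place. The only delicate point is the $C^1$-dependence of $L_i$ on $s$ where some $k_i$ passes through the value $1$, i.e. the circle/horocycle/hypercycle transition; but this regularity was already secured in Section \ref{section2}, so I do not anticipate a genuine obstacle.
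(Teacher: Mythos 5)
Your proposal is correct and follows essentially the same route as the paper's proof: $C^1$ regularity of $L_i$ in $s$ gives local existence and uniqueness via Picard--Lindel\"of (the paper cites Cauchy--Lipschitz), and the uniform bound $|L_i|<|F_{\{i\}}|\pi$ from Corollary \ref{cor1} bounds the right-hand side of \eqref{Flow} and rules out finite-time blow-up. Your write-up merely spells out the maximal-interval/extension argument that the paper leaves implicit.
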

\begin{proof}
Note that $L_i$ is a $C^1$ function of $(s_1,\cdots,s_n)$. By Cauchy-Lipschitz Theorem, the flow (\ref{Flow}) exists a unique solution $s(t)$ for some $t\in[0,\varepsilon)$. Since corollary \ref{cor1} indicates that
\[\left\vert L_{i}\right\vert<c_i\pi,\]
where $c_i=|F_{\{i\}}|$, we have
\[\left\vert\frac{ds_{i}}{dt}\right\vert
=\left\vert L_{i}-\hat{L}_{i}\right\vert
\leq\left\vert L_{i}\right\vert+\left\vert\hat{L}_{i}\right\vert
\leq c_i\pi+\left\vert\hat{L}_{i}\right\vert.\]
Hence, the flow \eqref{Flow} exists for all $t\in [0,\infty)$. 
The uniqueness of the flow can be easily deduced from the classical theory of ordinary differential equations.
\end{proof}
Since the flow (\ref{Flow}) is the negative gradient flow of $\mathcal{E}_{\hat{L}}(s)$,  $\mathcal{E}_{\hat{L}}$ exists a unique critical point if and only if there exists $s^*\in \mathbb{R}^{|V_{g,n}|}$ such that $\nabla\mathcal{E}_{\hat{L}}(s^*)=\hat{L},$ which is equivalent to $\hat{L}\in \mathcal{L}$ by Theorem \ref{main1}. Then we have the following lemma.
\begin{lemma}\label{lem4.2}
$\mathcal{E}_{\hat{L}}(s)$ exists a unique critical point if and only if $\hat{L}\in\mathcal{L}$.
\end{lemma}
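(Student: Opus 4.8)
The statement to prove is Lemma~\ref{lem4.2}: the modified potential $\mathcal{E}_{\hat{L}}(s)=\mathcal{E}(s)-\sum_i\int\hat{L}_i\,\mathrm{d}s_i$ has a unique critical point if and only if $\hat{L}\in\mathcal{L}$. The plan is to reduce the existence and uniqueness of a critical point entirely to properties already established for the unmodified potential $\mathcal{E}$ and its gradient map $\nabla\mathcal{E}$. The crucial observation is that $s^*$ is a critical point of $\mathcal{E}_{\hat{L}}$ precisely when $\nabla\mathcal{E}_{\hat{L}}(s^*)=0$, and since $\nabla\mathcal{E}_{\hat{L}}(s)=\nabla\mathcal{E}(s)-\hat{L}$, this is equivalent to $\nabla\mathcal{E}(s^*)=\hat{L}$. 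Thus the entire lemma is a restatement of the solvability of the equation $\nabla\mathcal{E}(s)=\hat{L}$, which is exactly what Theorem~\ref{main1} controls.

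For the forward direction, suppose $\mathcal{E}_{\hat{L}}$ has a critical point $s^*$. Then $\nabla\mathcal{E}(s^*)=\hat{L}$, so $\hat{L}$ lies in the image $\nabla\mathcal{E}(\mathbb{R}^{|V_{g,n}|})$. By the proof of Theorem~\ref{main1}, this image is exactly $\mathcal{L}$, hence $\hat{L}\in\mathcal{L}$. Conversely, if $\hat{L}\in\mathcal{L}$, then again by Theorem~\ref{main1} the map $\nabla\mathcal{E}$ is a bijection from $\mathbb{R}^{|V_{g,n}|}$ onto $\mathcal{L}$, so there exists $s^*$ with $\nabla\mathcal{E}(s^*)=\hat{L}$, i.e.\ $s^*$ is a critical point of $\mathcal{E}_{\hat{L}}$. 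I would state both directions as immediate consequences, taking care to cite Theorem~\ref{main1} for the identification of the image and Lemma~\ref{injective} (applied to $\mathcal{E}$, which is strictly convex with positive-definite Hessian $\widetilde{M}$ by Lemma~\ref{pro 3.1}) for injectivity.

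For uniqueness, the key point is that $\mathcal{E}_{\hat{L}}$ is itself strictly convex: its Hessian equals the Hessian of $\mathcal{E}$, namely $\widetilde{M}$, since the subtracted linear-in-$s$ term $\sum_i\int\hat{L}_i\,\mathrm{d}s_i$ contributes nothing to the second derivatives. By Lemma~\ref{pro 3.1}, $\widetilde{M}$ is positive definite everywhere, so $\mathcal{E}_{\hat{L}}$ is strictly convex on all of $\mathbb{R}^{|V_{g,n}|}$, and a strictly convex function can have at most one critical point. This gives uniqueness whenever a critical point exists. Alternatively, uniqueness follows directly from the injectivity of $\nabla\mathcal{E}$ guaranteed by Lemma~\ref{injective}.

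I do not anticipate a genuine obstacle here, since every analytic ingredient has already been assembled: the image computation and injectivity come from Theorem~\ref{main1}, and the convexity from Lemma~\ref{pro 3.1}. The only point requiring mild care is making the equivalence $\nabla\mathcal{E}_{\hat{L}}(s^*)=0\iff\nabla\mathcal{E}(s^*)=\hat{L}$ fully explicit, and ensuring that the ``image equals $\mathcal{L}$'' claim is being invoked with the correct (open, strict-inequality) description of $\mathcal{L}$ rather than its closure, so that the iff is genuinely sharp at the boundary. This is the one place where I would double-check that Theorem~\ref{main1} was proved with the strict inequalities defining $\mathcal{L}$, which it is.
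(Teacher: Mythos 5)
Your proposal is correct and follows essentially the same route as the paper: the paper also identifies critical points of $\mathcal{E}_{\hat{L}}$ with solutions of $\nabla\mathcal{E}(s^*)=\hat{L}$ and then invokes Theorem \ref{main1} (image of $\nabla\mathcal{E}$ equals $\mathcal{L}$, with injectivity from strict convexity) to get the equivalence and uniqueness. Your write-up is in fact more explicit than the paper's one-sentence derivation, and correctly flags (and fixes) the point where the paper's wording conflates $\nabla\mathcal{E}_{\hat{L}}(s^*)=\hat{L}$ with the intended $\nabla\mathcal{E}(s^*)=\hat{L}$.
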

The following property of convex functions plays an important role in proving  Theorem \ref{flowthm}, a proof of which could be found in \cite[Lemma 4.6]{Gexu}.
\begin{lemma}\label{convex_property}
Suppose $F(x)$ is a $C^{1}-$smooth convex function on $\mathbb{R}^{n}$ with $\nabla F(x_{0})=0$ for some $x_{0}\in\mathbb{R}^{n}$. Suppose $F(x)$ is $C^{2}-$smooth and strictly convex in a neighborhood of $x_0$. Then the following statements hold:
\begin{itemize}
\item[($a$)] $\nabla F(x)\neq0$ for any $x\neq x_0$.
\item[$(b)$] Then $\lim_{\Vert x\Vert\to+\infty}F(x)=+\infty$.
\end{itemize}
\end{lemma}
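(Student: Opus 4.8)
The plan is to deduce both statements from one structural fact: a critical point of a convex function is a global minimizer, which the local strict convexity near $x_0$ then sharpens. Since $F$ is convex and $\nabla F(x_0)=0$, the point $x_0$ is a global minimizer; write $m=F(x_0)$.

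For part $(a)$ I would argue by contradiction. If some $x_1\neq x_0$ also had $\nabla F(x_1)=0$, then $x_1$ would be a global minimizer as well, so $F(x_1)=m$. The restriction $g(t)=F((1-t)x_0+tx_1)$ is a convex function of $t\in[0,1]$ with $g(0)=g(1)=m$ and $g\ge m$; convexity forces $g(t)\le(1-t)g(0)+tg(1)=m$, hence $g\equiv m$ on $[0,1]$. But for small $t>0$ the segment lies inside the neighborhood where $F$ is strictly convex, so $g$ is strictly convex on an initial subinterval and cannot be constant there. This contradiction shows $x_0$ is the unique critical point.

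For part $(b)$ the key idea is to upgrade the purely local strict convexity to a uniform value gap on a small sphere and then spread this gap to infinity using global convexity. Fix $\delta>0$ so small that $\overline{B}(x_0,\delta)$ lies in the strict-convexity neighborhood. For each $y\in\partial B(x_0,\delta)$ the restriction of $F$ to $[x_0,y]$ is strictly convex with derivative $0$ at $x_0$, so its minimum is attained only at $x_0$ and $F(y)>m$; by compactness of the sphere there is a uniform gap $\varepsilon:=\min_{\partial B(x_0,\delta)}F-m>0$. Now, given any $x$ with $\|x-x_0\|>\delta$, set $y=x_0+\tfrac{\delta}{\|x-x_0\|}(x-x_0)\in\partial B(x_0,\delta)$, so that $y=(1-\lambda)x_0+\lambda x$ with $\lambda=\delta/\|x-x_0\|\in(0,1)$. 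Applying convexity once and rearranging,
\[
F(x)\ \ge\ m+\frac{F(y)-m}{\lambda}\ \ge\ m+\frac{\varepsilon}{\delta}\,\|x-x_0\|,
\]
which tends to $+\infty$ as $\|x\|\to+\infty$, proving coercivity.

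The delicate point, and the step I expect to be the real obstacle, is part $(b)$: local strict convexity by itself says nothing about behaviour at infinity, and one cannot simply check that $F\to+\infty$ along each ray, because a convex function may increase only sublinearly along a ray (with vanishing recession in that direction) while escaping to infinity along a curved path on which $F$ stays bounded. The sphere-gap argument sidesteps this entirely, since extracting a fixed positive gap $\varepsilon$ over a whole sphere and then invoking the convexity inequality a single time amplifies that gap into an at-least-linear lower bound holding in all directions simultaneously.
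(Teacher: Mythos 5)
Your proof is correct, and it is worth noting that the paper itself contains no proof of this lemma at all: it is stated with a pointer to \cite[Lemma 4.6]{Gexu}, so your argument is a self-contained replacement for that citation rather than a parallel to an in-paper argument. Both parts are handled by the standard convex-analysis reasoning and both check out: part ($a$) uses the fact that a critical point of a $C^1$ convex function is a global minimizer, so a second critical point $x_1$ would force $F$ to be constant on the segment $[x_0,x_1]$, contradicting strict convexity near $x_0$; part ($b$) extracts a uniform gap $\varepsilon>0$ on the sphere $\partial B(x_0,\delta)$ by compactness and then amplifies it through a single application of the convexity inequality into the linear lower bound $F(x)\ge m+\tfrac{\varepsilon}{\delta}\Vert x-x_0\Vert$, which gives coercivity.

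One caveat about your closing commentary, though it does not affect the validity of the proof: the claim that a convex function may tend to $+\infty$ along every ray from $x_0$ yet fail to be coercive is false. If $F$ were not coercive, some sublevel set $\{F\le\alpha\}$ would be unbounded; it is also closed and convex, and an unbounded closed convex subset of $\mathbb{R}^n$ contains a ray emanating from each of its points, in particular from $x_0$. Along that ray $F\le\alpha$, so $F$ would fail to diverge along some ray from $x_0$. Hence for convex functions, ray-wise divergence from a fixed point is actually equivalent to coercivity; the pathology you describe (bounded along a curved path escaping to infinity) occurs only for non-convex functions. Your sphere-gap argument remains the better choice here, since it is elementary, avoids this structure theory, and yields an explicit at-least-linear growth rate, but the stated justification for why a ray-by-ray check ``cannot'' work should be removed or corrected.
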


Now we begin to prove Theorem \ref{flowthm}.
\begin{proof}[Proof of Theorem \ref{flowthm}]
    We begin with the ``if''  part. Assume that $\{s(t)\}_{t\ge 0}$ is a solution of the flow \eqref{Flow}. Since $\hat{L}\in\mathcal{L}$,  $\mathcal{E}_{\hat{L}}$ has a unique critical point  by Theorem \ref{main1}. Since $\mathcal{E}_{\hat{L}}$ is strictly convex, it has a global minimum point.
    By the property of negative gradient flow, $\mathcal{E}_{\hat{L}}(s(t))$ is bounded from above.
Since $\mathcal{E}_{\hat{L}}$ is proper, $\{s(t)\}_{t\ge 0}$ is contained in a compact set in $\mathbb{R}^n$. 
    By \eqref{Flow}, we have
    \[
    \ddt{\|L(s(t))-\hat{L}\|^2}=-2(L-\hat{L})^t\nabla_sL(L-\hat{L}).
    \]
    where $\nabla_sL$ is the Hessian of $\mathcal{E}$ and is positive definite. Since $\{s(t)\}_{t\ge 0}$ lies in a compact region, the minimal eigenvalue of $\nabla_sL$ must be larger than a positive constant $\lambda_0$ as $t\ge 0.$ Hence we have 
    \[
   \|L(s(t))-\hat{L}\|^2\le exp(-2\lambda_0t)\|L(s(0))-\hat{L}\|^2.
    \]
    Since 
    the right side of \eqref{Flow} converges exponentially, $\{s(t)\}_{t\ge 0}$ converges exponentially fast to the logarithm of geodesic curvatures of the desired generalized circle packing.

    As for the ``only if'' part, suppose that $s(t)$ converges to some $s^*$,
    then
    \[\mathcal{E}_{\hat{L}}(s(i))\rightarrow \mathcal{E}_{\hat{L}}(s^*)~~\text{as} ~i\rightarrow+\infty.\]
    Therefore, by the mean value theorem,
    \[\mathcal{E}_{\hat{L}}(i+1)-\mathcal{E}_{\hat{L}}(i)=\ddt{\mathcal{E}_{\hat{L}}(s(\eta_i))}\rightarrow 0~~\text{as}~i\rightarrow\infty,\]
    for some $\eta_i\in[i,i+1].$
    Since
    \[
    \ddt{\mathcal{E}_{\hat{L}}(s(t))}=-\|L-\hat{L}\|^2
    ,\]
    we have
    \[
   \|L(s(\eta_i))-\hat{L}\|\rightarrow 0~~\text{as}~i\rightarrow\infty.
    \]
    There exists  $s^*$ such that $L(s^*)-\hat{L}=0$. Hence $\mathcal{E}_{\hat{L}}$ has a critical point. By Theorem \ref{main1},  we finally finish the proof.
\end{proof}

\section{Acknowledgments}
Guangming Hu is supported by NSF of China (No. 12101275). Yi Qi is supported by NSF of China (No. 12271017). Puchun Zhou is supported by Shanghai Science and Technology Program [Project No. 22JC1400100]. The authors would like to thank Xin Nie for helpful discussions.
 
\Addresses
\end{document}